\begin{document}

\title*{Convergent non complete interpolatory quadrature rules}
\author{U. Fidalgo and J. Olson}
\institute{Ulises Fidalgo \at Department of Mathematics, Applied Mathematics, and Statistics,
Case Western Reserve University, Cleveland, Ohio 43403, \email{uxf6@case.edu} This author's research was supported in part by the research grant MTM2012-36372-C03-01 funded by "Ministerio de Econom\'ia y Competitividad", Spain.
\and Jacob Olson \at Case Western Reserve University, Cleveland, Ohio 43403  \email{jfo27@case.edu}}

%
%

\maketitle

\abstract*{We find a family of convergent schemes of nodes for non-complete interpolatory quadrature rules.}

\abstract{We find a family of convergent schemes of nodes for non-complete interpolatory quadrature rules.}

\section{Introduction}
\label{sec:1}
Let $\mathcal{C}([-1,\,1])$ be the set of all continuous functions defined on $[-1,\,1]$. Given an $n$-tuple of nodes  ${\bf x}_n=(x_{1,n},\ldots,x_{j,n})$ satisfying   $-1<x_{1,n}<x_{2,n}<\cdots<x_{n,n}<1$, we consider {\it integration rules}  
\begin{equation}\label{rule}
I_n[f]= \sum_{j=1}^n w_{j,n} f(x_{j,n}), \quad f \in \mathcal{C}[-1,\,1]
\end{equation}
associated to the integrals 
\begin{equation}\label{integrals}
\displaystyle I(f)=\int_{-1}^1 f(x)\, \mbox{d} \lambda_0(x), \quad \mbox{where} \quad  \frac{\mbox{d} \lambda_0(x)}{\mbox{d} x}=\frac{1}{\pi \sqrt{1-x^2}}.
\end{equation}
The numbers $w_{j,n},$ $j=1,\ldots,n$ are called weights.

An integration rule $I_n[\cdot]$ is said to be interpolatory if there exists a number $m \in \left\{0,1,\ldots, 2n-1\right\}$, such that the following equality holds for every polynomial $p$ with degree $\leq m$ (we denote $p \in \Pi_m$): 
\begin{equation}\label{interpolatoryq}
I_n[p]=\int_{-1}^1 p(x)\, \mbox{d} \lambda_0(x).
\end{equation}
When the equality (\ref{interpolatoryq}) holds for certain $m$, and is not extendable for all polynomials with degree $m+1$, we say that $I_n[\cdot]$ is an interpolatory quadrature rule with $m$-degree of exactness. When $m=2n-1$ $I_n$ is the Gaussian quadrature rule.

Consider a sequence of interpolatory quadratures $\displaystyle \left\{I_n\right\}_{n \in \Lambda}$ constructed with the following schemes of nodes and weights 
\begin{equation}\label{nodessequence}
  {\bf x}=\left\{{\bf x}_n=(x_{1,n},\ldots,x_{n,n})\right\}_{n\in \Lambda} \quad \mbox{and} \quad {\bf w}=\left\{{\bf w}_n=(w_{1,n},\ldots,w_{n,n})\right\}_{n\in \Lambda},
 \end{equation}
respectively. 
 
We say that  $\displaystyle \left\{I_n\right\}_{n \in \Lambda}$ is convergent if
\begin{equation}\label{conv}
\lim_{n\in\Lambda} I_n[f]=\int_{-1}^1 f(x)\, \mbox{d} \lambda_0(x),\quad \mbox{for all} \quad f\in C[-1,1].
\end{equation} 
According to a classical result of P\'olya \cite[page 130]{DR}, when $\displaystyle  m(n) \to \infty$ as $n\to \infty $, the equality \eqref{conv} holds true if and only if $ \displaystyle \sup_{n\in \Lambda} \sum_{j=1}^n \left|w_{j,n}\right| < \infty.$
This condition is satisfied if the weights $w_{j,n}$ are all positive. From (\ref{interpolatoryq}) we observe that 
\begin{equation}\label{polyacond}
\sum_{j=1}^n w_{j,n}=\int_{-1}^1\, \frac{\mbox{d} x}{\pi \,\sqrt{1-x^2}}=1 < \infty.
\end{equation}
In the Gaussian quadrature rule (maximum degree of exactness  $m(n)=2n-1$) the weights $w_{j,n}$, $j=1,\ldots, n$ are all positive and the convergence of the rule is guarantied. However the nodes are all fixed. For each $n \in \mathbb{N}$ the points of evaluation $x_{j,n}$, $j=1,\ldots,n$ must be the roots of the $n$th orthogonal polynomial with respect to $\lambda_0$ (see for instance \cite{Sze}). This is the Chebyshev polynomial with degree $n$. This means that if we do not have  the value of $f$ at each point $x_{j,n}$ the calculus gets stuck. It is convenient to have more flexibility in the distribution of the evaluation nodes. We study convergent interpolatory integration rules with orders of exactness $m <2n-1$. 

The authors of \cite{BLS2} analyze a wide class of interpolatory quadrature rules with $m(n)$ degrees of exactness behaving as follows
\begin{equation}\label{limita}
\displaystyle \lim_{n \to \infty} \frac{m(n)}{2 n}=a \in [0,\, 1].
\end{equation}
They characterized all possible weak*-limit points of the sequence of counting measures associated with distribution of nodes corresponding to a convergent scheme. 

A sequence $\left\{\nu_n\right\}_{n\in\mathbb{N}}$ of measures is said to converge  weakly to the measure $\nu$ provided that there exists a compact set $K$ containing the support of $\nu$ and of each $\nu_n$, and that
\[
\lim_{n\in\mathbb{N}}\int f d\nu_n=\int f d\nu
\]
for each continuous function $f$ on $K$. In such a case, we write $\nu_n\overset{*}{\to}\nu$. We say that $\nu$ is a weak*-limit of the sequence $\left\{\eta_n\right\}_{n\in\mathbb{N}}$ if some subsequence of $\left\{\nu_n\right\}_{n\in \Lambda\subset \mathbb{N}}$ is weakly convergent to $\nu$.

Set two schemes of numbers as in (\ref{nodessequence}) associated to an interpolatory quadrature rule $\left\{I_{n}\right\}_{n\in \Gamma}$ where the degree of exactness satisfies (\ref{limita}) for certain $a\in [0,\, 1]$. We also consider its corresponding sequence $\left\{\eta_n\right\}_{n\in \mathbb{N}}$ of probability counting measures
\begin{equation}\label{zerocountingmeasures}
\eta_n:=\frac{1}{n} \sum_{j=1}^n \delta_{x_{j,n}}, \quad n \in \mathbb{N}.
\end{equation} 
According to \cite{BLS1}, if the rule $\{I_n\}_{n \in \Lambda}$ is convergent then every weak*-limit $\nu$ of the 
the sequence $\left\{\eta_n\right\}_{n\in\mathbb{N}}$ satisfies that 
\begin{equation}\label{sigmainequality}
 \nu \geq  a\, \lambda_0.
\end{equation}
Also from \cite{BLS1} we have that this necessary condition is not sufficient. Theorem \ref{main} states conditions of convergence on the distribution of nodes. 

Let us introduce some previous notation. Set $\mathcal{K}_1$ and $\mathcal{K}_2$ two compact subsets of the complex plane $\mathbb{C}$. Let $\displaystyle \mbox{dist}(\mathcal{K}_1, \mathcal{K}_2)=\min \left\{\left|\left|x-y\right|\right\|: x \in \mathcal{K}_1\,\, \mbox{and}\,\, y\in \mathcal{K}_2\right\}$ denote the distance between $\mathcal{K}_1$ and $\mathcal{K}_2$. Consider a compact set $K \subset \mathbb{C} \setminus [-1,\,1]$, and a measure $\mu$ supported on $K$. A measure $\widetilde{\mu}$ supported on $[-1,\,1]$ is said to be the balayage of $\mu$ if they have the same total variation $||\mu||=||\widetilde{\mu}||$ and their logarithmic  potentials coincide on $[-1,\,1]$. This is
\[
V^{\widetilde{\mu}}(x)=\int \log \frac{1}{|x-t|} \mathrm{d} \widetilde{\mu}(t)=\int \log \frac{1}{|x-\zeta|} \mbox{d} \mu (\zeta)=V^{\mu}(x), \quad x\in [-1,\,1].
\]
In \cite[Section II.4]{ST} we can find a deep study about balayage of measures. We are now ready to state the main result of this paper:

\begin{theorem}\label{main} Fix a number $\kappa \in \mathbb{N}$ and a probability discrete measure 
\[
\sigma=\frac{1}{\kappa}\sum_{k=1}^{\kappa} \delta_{\zeta_k}, \quad \zeta_k \subset \mathbb{C}\setminus [-1,\,1], \quad k=1,\ldots, \kappa. 
\]
Assume that $\sigma$ is symmetric with respect to $\mathbb{R}$ with $\displaystyle \mathrm{dist}\left(\left\{\zeta_1,\ldots,\zeta_{\kappa}\right\},[-1,\,1]\right)>1$. Denote $\widetilde{\sigma}$ the balayage measure associated to $\sigma$ supported on the interval $[-1,\,1]$. Given a rational number $a \in [0,\, 1]$, consider a subsequence $\Lambda \subset \mathbb{N}$ such that for each $n\in \Lambda$, $\displaystyle 2 \frac{1-a}{\kappa} n \in \mathbb{N}$. Let $\displaystyle {\bf x}=\left\{{\bf x}_n=\left(x_{1,n}, \ldots, x_{n,n}\right)\right\}_{n\in \Lambda}$ be a scheme of nodes. If for each $j=1,\ldots n,$ $n\in \Lambda$ there are two constants $A\geq 0$ and $\ell>0$ satisfying  
\begin{equation}\label{maincond}
\left|(1-a)\pi \int^1_{x_{j,n}} \mathrm{d} \widetilde{\sigma}(t) -a \arccos x_{j,n}-\frac{2j-1}{2n}\pi\right|\leq A  e^{-\ell n},
\end{equation}
then there always exist weights $\displaystyle {\bf w}=\left\{{\bf w}_n=(w_{1,n}, \ldots, w_{n,n})\right\}_{n\in \mathbb{N}}$, where $\displaystyle \left\{I_n\right\}_{n\in \Lambda}$ corresponding to ${\bf x}$ and ${\bf w}$ is convergent.
\end{theorem}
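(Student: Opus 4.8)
\noindent\textbf{Proof proposal.} The plan is to exhibit the given nodes as an exponentially small perturbation of the Gaussian nodes of an auxiliary quadrature rule that has \emph{positive} weights, and then to transplant those weights and apply P\'olya's criterion.

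\smallskip\noindent\textbf{Step 1 (reformulation of \eqref{maincond}).} Since $\int_x^1\mathrm d\lambda_0(t)=\tfrac1\pi\arccos x$ and $\|\widetilde\sigma\|=\|\sigma\|=1$, the measure $\mu:=(1-a)\widetilde\sigma+a\lambda_0$ is a probability measure on $[-1,1]$ with $\mu\ge a\lambda_0$ (consistent with \eqref{sigmainequality}), and \eqref{maincond} says precisely that $\bigl|\int_{x_{j,n}}^{1}\mathrm d\mu(t)-\tfrac{2j-1}{2n}\bigr|\le\tfrac{A}{\pi}e^{-\ell n}$, i.e. each $x_{j,n}$ lies within $O(e^{-\ell n})$ of the $\mu$-quantile point $y_{j,n}$ defined by $\int_{y_{j,n}}^1\mathrm d\mu=\tfrac{2j-1}{2n}$. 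Next I would introduce the auxiliary polynomial $R_n(x):=\prod_{k=1}^{\kappa}(x-\zeta_k)^{2(1-a)n/\kappa}$, of degree $2(1-a)n$ (the exponent is an integer by the hypothesis on $\Lambda$); because $\sigma$ is symmetric about $\mathbb R$ and $\mathrm{dist}(\{\zeta_k\},[-1,1])>1$, $R_n$ has real coefficients and does not vanish on $[-1,1]$, so up to a sign we may take $R_n>0$ there. From $\log|R_n(x)|=-2(1-a)n\,V^{\sigma}(x)$, the balayage identity $V^{\sigma}=V^{\widetilde\sigma}$ on $[-1,1]$, and $V^{\lambda_0}\equiv\log 2$ on $[-1,1]$, one gets $V^{\mu}(x)-(1-a)V^{\sigma}(x)=a\log 2$ on $[-1,1]$; hence $\mu$ is the equilibrium measure of $[-1,1]$ in the external field $Q:=-(1-a)V^{\sigma}$, equivalently the weak$^*$ limit of the zero counting measures of the orthogonal polynomials for the varying weight $\mathrm d\lambda_0(x)/R_n(x)$.

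\smallskip\noindent\textbf{Step 2 (a positive-weight rule).} Let $g_{1,n}<\cdots<g_{n,n}$ and $G_{j,n}>0$ be the nodes and weights of the $n$-point Gaussian rule for $\mathrm d\lambda_0/R_n$, exact on $\Pi_{2n-1}$, and set $\widehat w_{j,n}:=G_{j,n}R_n(g_{j,n})>0$. Then for every $f$ with $R_nf\in\Pi_{2n-1}$, i.e. for $f\in\Pi_{2an-1}$,
\[
\sum_{j=1}^n \widehat w_{j,n}\,f(g_{j,n})=\int_{-1}^1 f(x)\,\mathrm d\lambda_0(x),
\]
and in particular $\sum_j\widehat w_{j,n}=1$ by \eqref{polyacond}. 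Since $Q$ is real-analytic on a neighbourhood of $[-1,1]$ (here the hypothesis $\mathrm{dist}(\{\zeta_k\},[-1,1])>1$ is used), the fine asymptotics of orthogonal polynomials with analytic varying weights give $|g_{j,n}-y_{j,n}|\le Ce^{-cn}$, whence $|x_{j,n}-g_{j,n}|\le A'e^{-\ell'n}$ by Step 1. (When $a=0$, where $2an-1<0$, one instead takes $\deg R_n=2(n-p_n)$ with $p_n\to\infty$, $p_n=o(n)$, $\kappa\mid 2(n-p_n)$; the same computation gives exactness on $\Pi_{2p_n-1}$, and nothing below changes.)

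\smallskip\noindent\textbf{Step 3 (transplanting the weights and conclusion).} Put $m(n):=\min\{2an-1,n-1\}$, so $m(n)\to\infty$. The monomials $1,x,\dots,x^{m(n)}$ sampled at the $n$ distinct nodes $x_{1,n},\dots,x_{n,n}$ span a space of dimension $m(n)+1\le n$, so the moment map $\mathbf w\mapsto(\sum_j w_j x_{j,n}^{\,i})_{i=0}^{m(n)}$ is onto; I would let $\mathbf w_n:=\widehat{\mathbf w}_n+\boldsymbol\delta_n$, with $\boldsymbol\delta_n$ the minimal-norm solution of $\sum_j\delta_{j,n}x_{j,n}^{\,i}=\int x^i\mathrm d\lambda_0-\sum_j\widehat w_{j,n}x_{j,n}^{\,i}$ ($0\le i\le m(n)$). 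The right-hand side equals $\sum_j\widehat w_{j,n}(g_{j,n}^{\,i}-x_{j,n}^{\,i})$, of norm $O(m(n)A'e^{-\ell'n})$ by Step 2; combined with a sub-exponential bound for the pseudo-inverse of the Vandermonde matrix $\bigl(x_{j,n}^{\,i}\bigr)$ coming from the $\mu$-quasi-uniform spacing of the nodes, this yields $\sum_j|\delta_{j,n}|=o(1)$. Consequently the rule $I_n[\cdot]$ built from $\mathbf x_n$ and $\mathbf w_n$ is interpolatory with degree of exactness $\ge m(n)\to\infty$, while $\sum_j|w_{j,n}|\le\sum_j\widehat w_{j,n}+o(1)=1+o(1)$ is bounded; P\'olya's theorem \cite[p.~130]{DR} then gives \eqref{conv}.

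\smallskip\noindent\textbf{Expected main obstacle.} The routine part is the potential-theoretic identity in Step 1 and the bookkeeping in Step 3; the real work is the two quantitative estimates: (i) the exponentially precise localization $|g_{j,n}-y_{j,n}|\le Ce^{-cn}$ of the Gaussian nodes of the varying weight $\mathrm d\lambda_0/R_n$ — this is exactly why \eqref{maincond} must carry an \emph{exponential} rate, and it is where $\mathrm{dist}(\{\zeta_k\},[-1,1])>1$ enters decisively (analyticity of the field on a full neighbourhood of $[-1,1]$); and (ii) controlling the conditioning of the transplantation, i.e. checking that the sub-exponential growth of the Vandermonde pseudo-inverse at the perturbed, $\mu$-distributed nodes does not overwhelm the factor $e^{-\ell'n}$. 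If one prefers to avoid Riemann--Hilbert input, estimate (i) can alternatively be obtained by comparing $R_n^{-1}\mathrm d\lambda_0$ with the Chebyshev weight directly, using the explicit form of $V^{\sigma}$ and Szeg\H{o}-type bounds on $[-1,1]$.
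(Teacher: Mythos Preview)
Your overall strategy---compare $\mathbf x_n$ with the Gaussian nodes $g_{j,n}$ of the varying measure $\mathrm d\lambda_0/R_n$ and transplant the positive weights---is different from the paper's route, which never transplants weights but instead invokes a Wendroff-type lemma to realise the \emph{given} nodes as exact zeros of an orthogonal polynomial for a perturbed measure, and then verifies a Fourier partial-sum criterion (their Lemmas~\ref{interwendroff}--\ref{lemaclave}) that forces the interpolatory weights defined by~\eqref{lambda} to be eventually positive. Your Steps~1 and~2 match the paper's potential-theoretic setup and its Riemann--Hilbert output (Lemma~\ref{explicitexp}) quite closely; note, however, that real-analyticity of $Q$ on a neighbourhood of $[-1,1]$ only needs $\zeta_k\notin[-1,1]$, not the stronger hypothesis $\mathrm{dist}(\{\zeta_k\},[-1,1])>1$. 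In the paper that hypothesis is used for a different purpose: it gives $V^{\sigma}<0$ on $[-1,1]$, which is what makes the structural estimate~\eqref{extructura} go through.

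The genuine gap is Step~3. The pseudo-inverse of the rectangular \emph{monomial} Vandermonde matrix $(x_{j,n}^{\,i})_{0\le i\le m(n),\,1\le j\le n}$ does \emph{not} grow sub-exponentially for nodes in $[-1,1]$: already for Chebyshev nodes the $n\times n$ monomial Vandermonde has condition number of order $(1+\sqrt2)^n$, and the $\mu$-distributed nodes here are no better. Hence $\|\boldsymbol\delta_n\|$ is only controlled by $C^{\,n}e^{-\ell' n}$ for some $C>1$ depending on $\mu$, and since the theorem must hold for \emph{every} $\ell>0$ in~\eqref{maincond} (and thus for arbitrarily small $\ell'$), you cannot guarantee $C^{\,n}e^{-\ell' n}\to 0$. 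The transplantation therefore fails as written. A repair would require abandoning the monomial basis in favour of a basis adapted to the varying measure (e.g.\ the orthonormal polynomials for $\mathrm d\lambda_0/R_n$, for which discrete orthogonality at the $g_{j,n}$ gives well-conditioned rows), together with uniform control of $\sum_j G_{j,n}^{-1/2}$; this is nontrivial and essentially forces you back into the orthogonal-polynomial machinery that the paper exploits directly. Until that conditioning estimate is supplied, Step~3 is an assertion, not a proof.
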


In Section \ref{examples} we give some explicit schemes that satisfy the relation (\ref{maincond}). The statement of Theorem \ref{main} is proved in Section \ref{proofmaintheorem}. In such proof we use results coming from the orthogonal polynomials theory that are analyzed in Section \ref{sec:2} and Section \ref{asymptotic analysis}. In Section \ref{sec:2} we study algebraic  properties of families of orthogonal polynomials and their connections with convergent conditions of non-complete interpolatory quadrature rules. In Section \ref{asymptotic analysis} we describe the strong asymptotic behavior of an appropriated family of orthogonal polynomials with respect to a varying measure.

\section{Some explicit convergent schemes of nodes}\label{examples}

We consider three particular cases where the inequality (\ref{maincond}) holds. In the three situations the measure $\sigma=\delta_{\zeta}$ corresponds to a Dirac delta supported on a point belonging to the real line $\zeta>2$. Hence the situations are when $a$ takes the values $0,$ $1/2,$ and $1$. 

According to \cite[Section II.4 equation (4.46)]{ST}, the balayage measure of $\sigma=\delta_{\zeta}$ on $[-1,\,1]$ has the following differential form
\begin{equation}\label{zetadeltabalayage}
\mathrm{d} \widetilde{\sigma}(t)=\frac{\sqrt{\zeta^2-1}}{\pi (\zeta-t)\sqrt{1-t^2}} \, \mathrm{d}\, t.
\end{equation}
We study the function
\[
I_a(x)=(1-a)\pi \int^1_{x} \mathrm{d} \widetilde{\sigma}(t)=(1-a) \sqrt{\zeta^2-1} \int^1_{x} \frac{\mathrm{d}\, t}{(\zeta-t)\sqrt{1-t^2}}.
\]
Taking the change of variables $t=\cos \theta$ and taking into account $\zeta>2$ ($\varphi(\zeta)>2$ implies that $\arg (1-\varphi(\zeta))=\pi$), we have that
\[
I_a(x)=(1-a)\, \left( -\arccos x +2 \arg \left(e^{i \arccos x}-\varphi(\zeta)\right)-2\pi \right).
\]
In this situation the condition of convergence (\ref{maincond}) in Theorem \ref{main} acquires the following form
\begin{equation}\label{mainconddelta}
\left|\arccos x_{j,n}+2(1-a)\, \left[\pi- \arg \left(e^{i \arccos x}-\varphi(\zeta)\right) \right] +\frac{2j-1}{2n}\pi\right|\leq A  e^{-\ell n}.
\end{equation}
Then a scheme $\displaystyle {\bf x}=\left\{{\bf x}_n=\left(x_{1,n}, \ldots, x_{n,n}\right)\right\}_{n\in \Lambda}$ that satisfies the following relation is convergent  
\[
\arccos x_{j,n}+2(1-a)\, \left[\pi- \arg \left(e^{i \arccos x}-\varphi(\zeta)\right) \right]=-\frac{2j-1}{2n}\pi-A e^{-\ell n}=\kappa_{j,n},
\]
with $A>0$ and $\ell>0$. This means that
\[
\cos \left(\arccos x_{j,n}+2(1-a)\, \left[\pi- \arg \left(e^{i \arccos x}-\varphi(\zeta)\right) \right]\right)=\cos \kappa_{j,n}.
\]
Using the cosine addition formula we have that
\[
x_{j,n}\cos \left\{2(1-a)\, \left[\pi- \arg \left(e^{i \arccos x}-\varphi(\zeta)\right) \right]\right\}\]
\begin{equation}\label{generalexample}
-\sqrt{1-x_{j,n}^2}\sin \left\{2(1-a)\, \left[\pi- \arg \left(e^{i \arccos x}-\varphi(\zeta)\right) \right]\right\}=\cos \kappa_{j,n}.
\end{equation}

First we consider the situation $a=1$. In this case the expressions in (\ref{generalexample}) become  
\begin{equation}\label{aonesol}
x_{j,n}=\cos \kappa_{j,n}, \quad j=1,\ldots, n, \quad n\in \Lambda.
\end{equation}
The nodes are close to the zeros of the Chebyshev polynomials. That's why the term corresponding to the $\sigma$'s influence in (\ref{generalexample}) vanishes when $a=1$.

Let us analyze now the case $a=1/2$. We consider the following identities
\begin{equation}\label{condcos}
\cos \left[\pi-\arg \left(e^{i \arccos x_{j,n}}-\varphi(\zeta)\right)\right]=\frac{\varphi(\zeta)-x_{j,n}}{\sqrt{\varphi^2(\zeta)-2 \varphi(\zeta) x_{j,n}+1}}
\end{equation}
and
\begin{equation}\label{condsin}
\sin \left[\pi-\arg \left(e^{i \arccos x_{j,n}}-\varphi(\zeta)\right)\right]=\frac{\sqrt{1-x_{j,n}^2}}{\sqrt{\varphi^2(\zeta)-2 \varphi(\zeta) x_{j,n}+1}}.
\end{equation}
Substituting (\ref{condcos}) and (\ref{condsin}) in (\ref{generalexample}) we arrive at the quadratic equations:
\[
x_{j,n}^2-\frac{2\sin^2\kappa_{j,n}}{\varphi(\zeta)}\, x_{j,n}+ \frac{\sin^2\kappa_{j,n}}{\varphi^{2}(\zeta)} -\cos^2 \kappa_{j,n}=0, \quad j=1,\ldots, n, \quad n\in \Lambda.
\]
For each $j=1,\ldots, n,$ $ n\in \Lambda$ we obtained the following solutions
\begin{equation}\label{ahalfsol}
x_{j,n}=\frac{1}{\varphi(\zeta)}\left[\sin^2 \kappa_{j,n}+ \cos \kappa_{j,n} \sqrt{\varphi^2(\zeta)-\sin^2 \kappa_{j,n}}\right].
\end{equation}
During the process of finding these above solutions we introduce some extra solutions that we removed. Observe that when $\zeta$ tends to $\infty$ the expressions in (\ref{ahalfsol}) reduce to (\ref{aonesol}). This is in accordance with the fact that $\widetilde{\sigma}$ approaches  $\lambda_0$ as $\zeta \to \infty$, see (\ref{zetadeltabalayage}), hence we only considered the positive branch of the square root in (\ref{ahalfsol}).

Finally take $a=0$. From (\ref{generalexample}) we have that
\[
x_{j,n}\cos \left\{2\, \left[\pi- \arg \left(e^{i \arccos x}-\varphi(\zeta)\right) \right]\right\}
\]
\[
-\sqrt{1-x_{j,n}^2}\sin \left\{2\, \left[\pi- \arg \left(e^{i \arccos x}-\varphi(\zeta)\right) \right]\right\}=\cos \kappa_{j,n}.
\]
We use the conditions (\ref{condcos}) and (\ref{condsin}), and obtain the following expression
\[
x_{j,n}=\frac{2 \zeta \cos \kappa_{j,n}}{\varphi(\zeta) + 2 \cos \kappa_{j,n}}, \quad j=1,\ldots, n, \quad n\in \Lambda.
\]
Taking into account that $\displaystyle \varphi(\zeta)=\zeta+\sqrt{\zeta^2+1}$ we see that the above expression is reduced to (\ref{aonesol}) when $\zeta$ goes to infinity.

\section{Connection with orthogonal polynomials}\label{sec:2}

Let $\mu$ be a positive finite Borel measure with infinitely many points in its support $\mbox{supp}(\mu)$. Set $\Delta$ denoting the least interval which contains $\mbox{supp}(\mu)$. A collection of monic polynomials $\displaystyle \left\{q_{\mu,n}\right\}_{n \in \mathbb{Z}_+},$ $\mathbb{Z}_+=\left\{0,1,\ldots\right\}$ is the family orthogonal polynomials with respect to $\mu$ if its elements satisfy the following orthogonality relations
\begin{equation}\label{ortoargentino}
0=\int  x^{\nu} q_{\mu,n}(x) \, \mbox{d} \mu(x), \quad \nu=0,1,\ldots, n-1, \quad n\in \mathbb{Z}_+.
\end{equation}
Each $q_{\mu,n}$ has $n$ single roots lying in the interior of $\Delta$ (we denote $\displaystyle \overset{\circ}{\Delta}$) such that it vanishes at most once in each interval of $\displaystyle \Delta \setminus \mbox{supp}(\mu)$ (see \cite[Theorem 5.2]{chi} or \cite[Chapter 1]{freud}). We also know that $q_{\mu,n+1}$ and $q_{\mu,n}$ interlace their zeros. In \cite{We} B. Wendroff proved that given two polynomials $P_n$ and $P_{n+1}$, with $\deg P_{n+1}=\deg P_n+1=n+1$, that interlace zeros, there always exist measures $\mu$ such that $P_n=q_{\mu,n}$ and $P_{n+1}=q_{\mu,n+1}$. Now we find some of these measures. 

We say then a polynomial $\displaystyle P_n(x)=\prod_{j=1}^n\left(x-x_{j}\right)$ of degree $n$ is admissible with respect to the measure $\mu$, if its roots are all simple, lying in $\displaystyle \overset{\circ}{\Delta}$, with at most one zero into each interval of $\displaystyle \Delta \setminus \mbox{supp}(\mu)$. The system of nodes $(x_{1},\ldots,x_{n})$ is also said to be admissible with respect to $\mu$.

\begin{lemma}\label{interwendroff} Let $\displaystyle P_n(x)=\prod_{j=1}^n \left(x-x_{j}\right)$ and $\displaystyle \widetilde{P}_{n}(x)=\prod_{j=1}^{n-1} \left(x-\widetilde{x}_{j}\right)$ be two admissible polynomials with respect to $\mu$ that satisfy
$\displaystyle x_1<\widetilde{x}_1<x_2< \cdots < \widetilde{x}_{n-1}<x_n.$
Then there exists a positive integrable function $\rho_n$ with respect to $\mu$ ($\rho_n$ is a weight function for $\mu$) such that for the measure $\mu_n$ which differential form $\displaystyle \mbox{d} \mu_n(x)=\rho_n(x) \, \mbox{d} \mu(x),$ $x \in \mbox{supp} (\mu)$, $P_n\equiv q_{\mu_n,n}$ and $\widetilde{P}_{n}\equiv q_{\mu_n, n-1}$ are the $n$-th and $n-1$-th monic orthogonal polynomials with respect to $\mu_n$, respectively.
\end{lemma}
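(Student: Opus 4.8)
The plan is to convert the two orthogonality requirements into a single statement about a positive linear functional on $\Pi_{2n-1}$, and then to realise that functional as integration against $\rho_n\,\mathrm{d}\mu$ by a convexity argument (essentially the truncated moment problem).

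First I would translate the requirements $P_n\equiv q_{\mu_n,n}$ and $\widetilde P_n\equiv q_{\mu_n,n-1}$: since $P_n$ and $\widetilde P_n$ are already monic of degrees $n$ and $n-1$, they amount to the $2n-1$ linear conditions $\int x^{m}P_n\,\mathrm{d}\mu_n=0$ ($0\le m\le n-1$) and $\int x^{m}\widetilde P_n\,\mathrm{d}\mu_n=0$ ($0\le m\le n-2$); that is, the functional $p\mapsto\int p\,\mathrm{d}\mu_n$ must annihilate $V:=P_n\Pi_{n-1}+\widetilde P_n\Pi_{n-2}\subset\Pi_{2n-1}$. Because the zeros of $P_n$ and $\widetilde P_n$ interlace they are distinct, so $P_n,\widetilde P_n$ are coprime, the sum defining $V$ is direct, $\dim V=2n-1$, and the annihilator of $V$ in $\Pi_{2n-1}^{*}$ is one-dimensional. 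I would then exhibit a generator of it: the functional $L_n[p]:=\sum_{k=1}^{n}\lambda_k\,p(x_k)$ with $\lambda_k:=1/\bigl(P_n'(x_k)\,\widetilde P_n(x_k)\bigr)$ annihilates $P_n\Pi_{n-1}$ because $P_n(x_k)=0$, and annihilates $\widetilde P_n\Pi_{n-2}$ because $\lambda_k\widetilde P_n(x_k)=1/P_n'(x_k)$ while the Lagrange identity gives $\sum_{k}x_k^{m}/P_n'(x_k)=0$ for $0\le m\le n-2$. Hence a non-trivial positive measure $\mu_n$ is as desired if and only if $\int p\,\mathrm{d}\mu_n=c\,L_n[p]$ for all $p\in\Pi_{2n-1}$ and some $c$, necessarily $c>0$.

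Next I would observe that $L_n$ is a positive quadrature rule: reading off signs from $x_1<\widetilde x_1<x_2<\cdots<\widetilde x_{n-1}<x_n$ gives $\mathrm{sgn}\,P_n'(x_k)=\mathrm{sgn}\,\widetilde P_n(x_k)=(-1)^{\,n-k}$, so every $\lambda_k>0$. (The discrete measure $\sum_k\lambda_k\delta_{x_k}$ already has $P_n,\widetilde P_n$ as its last two monic orthogonal polynomials, by the same identity; only absolute continuity with respect to $\mu$ is missing.) It therefore remains to produce $\rho_n\in L^{1}(\mu)$, strictly positive $\mu$-a.e., whose first $2n$ moments against $\mu$ equal $\bigl(\sum_k\lambda_k x_k^{j}\bigr)_{j=0}^{2n-1}$.

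For this I would use the moment cone $\mathcal C_\mu:=\bigl\{\bigl(\int x^{j}\rho\,\mathrm{d}\mu\bigr)_{j=0}^{2n-1}:\rho\in L^{1}(\mu),\ \rho\ge 0\bigr\}\subset\mathbb{R}^{2n}$. Since $\mathrm{supp}(\mu)$ is infinite, $\mathcal C_\mu$ is a full-dimensional pointed convex cone whose dual cone corresponds exactly to the $p\in\Pi_{2n-1}$ that are non-negative on $\mathrm{supp}(\mu)$; consequently a point of $\mathcal C_\mu$ is interior precisely when the associated functional is strictly positive on every such $p\ne0$, and an interior point is represented by a strictly positive density (subtract a small multiple of the moment vector of $\mu$). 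So it suffices to show that $L_n[p]=\sum_k\lambda_k p(x_k)>0$ for every $p\in\Pi_{2n-1}\setminus\{0\}$ non-negative on $\mathrm{supp}(\mu)$; equivalently, that no such $p$ vanishes at all of $x_1,\dots,x_n$. This is the heart of the argument and the point where admissibility is used: such a $p$ is divisible by $P_n$, say $p=P_n r$ with $\deg r\le n-1$; since each $x_k$ is interior to $\Delta$ and $P_n$ is admissible for $\mu$, a sign-change argument forces $r(x_k)=0$ for every $k$ (otherwise $P_n r$ would be negative on $\mathrm{supp}(\mu)$ near some $x_k$), whence $P_n\mid r$ and $r\equiv0$, a contradiction. (When $\mathrm{supp}(\mu)$ is an interval, the case relevant to Theorem~\ref{main}, this is simply that $P_n r\ge0$ on an interval having all the simple zeros of $P_n$ in its interior forces $r$ to vanish at those zeros.) Granting this, $\bigl(\sum_k\lambda_k x_k^{j}\bigr)_j$ is interior to $\mathcal C_\mu$, the required $\rho_n$ exists, and $\mathrm{d}\mu_n:=\rho_n\,\mathrm{d}\mu$ completes the proof. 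I expect this strict-positivity step to be the main obstacle; the rest is bookkeeping.
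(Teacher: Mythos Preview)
Your convexity/separation framework is essentially the same as the paper's: both reduce the existence of $\rho_n$ to showing that a certain moment vector lies in (the interior of) a convex cone, and both obtain from a hypothetical supporting hyperplane a nonzero polynomial in $\Pi_{2n-1}$ that is non-negative on $\operatorname{supp}(\mu)$ and must be shown to be impossible. Your route is pleasantly dual: you first identify the unique target functional $L_n[p]=\sum_k\lambda_k p(x_k)$ (with $\lambda_k>0$ by interlacing) and then ask whether its moment vector is interior to $\mathcal{C}_\mu$, whereas the paper maps densities to the $(2n-1)$-vector of orthogonality defects and asks whether $0$ is attained. The two questions are equivalent, since $\ker L_n|_{\Pi_{2n-1}}=P_n\Pi_{n-1}\oplus\widetilde P_n\Pi_{n-2}$.

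The place where your argument and the paper's genuinely diverge is the ``strict positivity'' step, and here your sketch is complete only in the interval case. Your equivalence ``$L_n[p]>0$ iff $p$ does not vanish at every $x_k$'' uses $p(x_k)\ge 0$, which you get from $p\ge 0$ on $\operatorname{supp}(\mu)$ \emph{only when} $x_k\in\operatorname{supp}(\mu)$. In the general admissible setting some $x_k$ may lie in a gap of $\operatorname{supp}(\mu)$, and then $p(x_k)$ can be negative; you can no longer conclude $p(x_k)=0$ for all $k$, hence not $P_n\mid p$, and the subsequent sign-change claim (``$r(x_k)=0$ because $P_nr$ would otherwise be negative on $\operatorname{supp}(\mu)$ near $x_k$'') also breaks down: admissibility only forces $r$ to change sign \emph{somewhere} in that gap, not at $x_k$. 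The paper handles exactly this difficulty with a different mechanism: writing $\widetilde P_n/P_n=\sum_j\lambda_j/(x-x_j)$ with $\lambda_j>0$ (again from interlacing), factoring out the common zeros of $\mathcal P$ and $P_n$, and using a contour-integral/residue computation to produce $n-\ell$ discrete orthogonality relations that force an auxiliary factor $\widetilde p$ to have too many sign changes. That argument is what carries the general admissible case.

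In short: for $\operatorname{supp}(\mu)$ an interval (the only case actually used in the proof of Theorem~\ref{main}) your proof is correct and arguably cleaner than the paper's; for the lemma as stated, the paper's partial-fraction/Cauchy-integral step is doing real work that your sign-change sketch does not yet cover.
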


In the proof we follow techniques used in \cite{kroo}.

\begin{proof} Consider $\Phi$ a set of weight functions such that for every constant $\alpha >0$ it satisfies:
\begin{itemize}
    \item[i)] $\rho \in \Phi \implies \alpha \rho \in \Phi.$
    \item[ii)] $(\rho, \widetilde{\rho})\in \Phi^2= \Phi \times \Phi \implies \alpha \rho +(1-\alpha)\widetilde{\rho} \in \Phi,$ $\alpha \leq 1.$
    \item[iii)] If a polynomial $Q$ satisfies $\displaystyle \int Q(x) \rho(x) \, \mbox{d} \mu(x) >0$ for all $\rho \in \Phi,$ then $Q \geq 0$ in $\mbox{supp} (\mu)$.
\end{itemize}

Two examples of sets of weight functions satisfying the above conditions are the positive polynomials and positive simple functions in \cite[Definition 1.16]{rud}. In general, the positive linear combinations of a Chevyshev system (see \cite[Chapter II]{KN}) conform a set as $\Phi$. Examples of Chevyshev systems can be found in \cite{nik} (also in \cite{FMM}).

Given $\rho \in \Phi$ we set
\[
{\bf v}_{\rho}=\left(\int \widetilde{P}_n(x)\rho(x) \mbox{d} \mu(x), \ldots, \int x^{n-2} \widetilde{P}_n(x)\rho(x) \mbox{d} \mu(x),\right.
\]
\[
\left.\int P_{n}(x)\rho(x) \mbox{d} \mu(x), \ldots,\int x^{n-1} P_{n}(x)\rho(x) \mbox{d} \mu(x)\right)\in \mathbb{R}^{2n-1}.
\]
Let us focus on $\mathcal{K}=\left\{{\bf v}_{\rho}:\rho \in \Phi\right\}$. Proving Lemma \ref{interwendroff} reduces to showing that $\mathcal{K}$ contains the origin. From condition $(i)$ we have that the origin belongs to $\mathcal{K}$'s closure, $\overline{\mathcal{K}}$. Since $\mathcal{K}$ is open we need to prove the origin is an interior point. We proceed by contradiction. Suppose that the origin belongs to the boundary of $\mathcal{K}$. This is ${\bf O}\in \partial \mathcal{K}= \overline{K}\setminus \mathcal{K}$. There exists a hyper-plane $\mathcal{A}$ that touches tangentially $\partial \mathcal{K}$ at ${\bf O}$. On the other hand we have that condition (ii) implies that $\mathcal{K}$ is convex, then there exists a vector $\displaystyle {\bf a}=\left(a_{0,n-1}, \ldots, a_{n-2,n-1},a_{0,n},\ldots,a_{n-1,n}\right)$ which is orthogonal with respect to $\mathcal{A}$ in the sense  of the standard inner vector product (${\bf a}\cdot {\bf u}=0$, for all ${\bf u}\in \mathcal{A}$), and for each ${\bf v}_{\rho}\in \mathcal{K}$, ${\bf v}_{\rho} \cdot {\bf a}>0$. So the polynomials
\[
p_{n-1}(x)=a_{0,n-1}+a_{1,n-1}x+ \ldots+a_{n-2,n-1}x^{n-2}
\]
and
\[
p_{n}(x)=a_{0,n}+a_{1,n}x+ \ldots+a_{n-1,n}x^{n-1}
\]
satisfy that
\[
0< \int \left[p_{n-1}(x)\widetilde{P}_{n}(x)+p_{n}P_{n}(x)\right]\, \rho(x) \, \mbox{d} \mu(x), \quad \mbox{for all} \quad \rho \in \Phi.
\]
According to condition (iii) the polynomial $\mathcal{P}(x)=p_{n-1}(x)\widetilde{P}_n(x)+p_{n}P_{n}(x)$, with real coefficients, must be non-negative in $\mbox{supp}(\mu)$. However we shall prove that this is impossible, arriving then to a contradiction.

Assume that $\mathcal{P}(x)=p_{n-1}(x)\widetilde{P}_n(x)+p_{n}P_{n}(x)$ does not change sign in $\mbox{supp}(\mu)$. Suppose that there is a point $t\in \mbox{supp}(\mu)$, such that $t=x_k$ $\displaystyle k\in \left\{1, \ldots, n\right\}$ satisfying that $\mathcal{P}(x_k)=0$, then taking into account that $\widetilde{P}_{n}$ and $P_{n}$ interlace zeros, we have that $p_{n-1}(x_k)=0$. Also, since $\mathcal{P}$ is non-negative on $\mbox{supp}(\mu)$, we have that $t=x_k$ is a zero of multiplicity even for $\mathcal{P}$. Consider $\displaystyle S=\left\{t_1, \ldots, t_{\ell}\right\}\subset \left\{x_1,\ldots,x_{n}\right\}$ the set of all points where  $P_{n}$ and $\mathcal{P}$ vanishe at same time. Then we can write 
\begin{equation}\label{positivepol}
\mathcal{P}=q(x)\prod_{i=1}^{\ell} \left(x-t_{i}\right)^{2 d_i}, \quad d_i\in \mathbb{N}, \quad i=1,\ldots,\ell,
\end{equation}
where $q$ is a polynomial with positive values at every root of $P_{n}$. We also write
\begin{equation}\label{pnexpand}
p_{n-1}(x)=\widetilde{p}(x)\prod_{i=1}^{\ell} \left(x-t_{i}\right).
\end{equation}
The polynomial $\widetilde{p}$ has degree $\deg p_n-\ell$. Since $\widetilde{P}_{n}$ and $P_n$ interlace zeros, we have that
\begin{equation}\label{quotient}
\frac{\widetilde{P}_n(x)}{P_{n}(x)}=\sum_{j=1}^{n} \frac{\lambda_{j}}{x-x_{j}}, \quad \lambda_{j}>0, \quad j=1,\ldots,n.
\end{equation}
Observe that
\[
\frac{\displaystyle \prod_{i=1}^{\ell}\left(x-t_i\right)}{P_{n}(x)}=\frac{1}{\displaystyle  \prod_{i=1}^{\ell}\left(x-t_i\right)^{2d_i-1}q(x)}\left[p_{n-1}(x)\sum_{j=1}^{n} \frac{\lambda_{j}}{x-x_{j}}+p_{n}(x)\right].
\]
This means that the above function satisfies that
\[
\frac{1}{\displaystyle  \prod_{i=1}^{\ell}\left(z-t_i\right)^{2d_i-1}q(z)}\left[p_{n-1}(z)\sum_{j=1}^{n} \frac{\lambda_{j}}{z-x_{j}}+p_{n}(z)\right]=\mathcal{O}\left(\frac{1}{z^{n-\ell}}\right) \quad \mbox{as} \quad z \to \infty,
\]
which is a holomorphic functions on $\displaystyle \overline{\mathbb{C}}\setminus \left(\left\{x_1,\ldots,x_{n}\right\}\setminus S\right).$ For each $\nu=0,\ldots, n-\ell-2$ we have then
\[
\frac{z^{\nu}}{\displaystyle  \prod_{i=1}^{\ell}\left(z-t_i\right)^{2d_i-1}q(z)}\left[p_{n-1}(z)\sum_{j=1}^{n} \frac{\lambda_{j}}{z-x_{j}}+p_{n}(z)\right]=\mathcal{O}\left(\frac{1}{z^{2}}\right) \quad \mbox{as} \quad z\to \infty,
\]
also holomorphic functions on $\displaystyle \overline{\mathbb{C}}\setminus \left(\left\{x_1,\ldots,x_{n}\right\}\setminus S\right).$ Set the elements $\displaystyle y_j \in \left\{x_1,\ldots,x_{n}\right\}\setminus S$, $j=1, \ldots, n-\ell$ with $y_1 < y_2 < \cdots < y_{n-\ell}$, and $\widetilde{\lambda}_j$, $j=1, \ldots, n-\ell$  the coefficients $\lambda$'s defined in (\ref{quotient}) corresponding to points $y_j$. Also let $\lambda_j^{\prime}$ denote the $\lambda$'s of $t_j$, $j=1, \ldots, \ell$. Call $F$ the set of the roots of the polynomial $q$ defined in (\ref{positivepol}). Consider a closed integration path $\Gamma$ with winding number $1$ for all its interior points. Denote $\mbox{Ext} (\Gamma)$ and $\mbox{Int}(\Gamma)$ the unbounded and bounded connected components respectively of the complement of $\Gamma$. Take $\Gamma$ so that $I\subset \mbox{Int}(\Gamma)$ and   $F\subset \mbox{Ext}(\Gamma)$. From Cauchy's Theorem and the above two conditions, it follows that
\[
0=\frac{1}{2 \pi i} \int_{\Gamma} \frac{z^{\nu}}{\displaystyle  \prod_{i=1}^{\ell}\left(z-t_i\right)^{2d_i-1}q(z)}\left[p_{n-1}(z)\sum_{j=1}^{n} \frac{\lambda_{j}}{z-x_{j}}+p_{n}(z)\right] \, \mbox{d} z
\]
\[
=\frac{1}{2 \pi i} \int_{\Gamma} \frac{\displaystyle z^{\nu} p_{n-1}(z)\sum_{j=1}^{n-\ell} \frac{\widetilde{\lambda}_{j}}{z-y_{j}}\,  \mbox{d} z}{\displaystyle  \prod_{i=1}^{\ell}\left(z-t_i\right)^{2d_i-1}q(z)} + \frac{1}{2 \pi i} \int_{\Gamma} \frac{z^{\nu} p_{n}(z) \, \mbox{d} z}{\displaystyle  \prod_{i=1}^{\ell}\left(z-t_i\right)^{2d_i-1}q(z)} .
\]
Since $\displaystyle \frac{z^{\nu} p_{n}(z)}{\displaystyle  \prod_{i=1}^{\ell}\left(z-t_i\right)^{2d_i-1}q(z)}  \in \mathcal{H} \left(\mbox{Int}(\Gamma)\right)$ the second term vanishes. From (\ref{pnexpand}), using the Cauchy integral formula, we obtain: 
\[
0=\sum_{j=1}^{n-\ell} y_j^{\nu}\widetilde{p}(y_j)\frac{\widetilde{\lambda}_{j}}{\displaystyle  \prod_{i=1}^{\ell}\left(y^j-t_i\right)^{2(d_i-1)}q(y_j)}=0, \quad \nu=0,\ldots, n-\ell-1.
\]
Taking into account that for each $j=1, \ldots, n-\ell$, $\displaystyle \frac{\widetilde{\lambda}_{j}}{\displaystyle  \prod_{i=1}^{\ell}\left(y^j-t_i\right)^{2(d_i-1)}q(y_j)}>0$, we conclude that the above orthogonality relations imply that $\widetilde{p}$ must change sign at least $n-\ell$ times, hence $\deg \widetilde{p} \geq n-\ell$. Since $\deg \widetilde{p}= \deg p_{n-1}-\ell\leq n-\ell-1$ we arrive at a contradiction which completes the proof.
\end{proof}

Consider a monic polynomial $\displaystyle P_n(x)=\prod_{j=1}^n(x-x_{j})$ with degree $n\in \mathbb{N}$ which is $\mu$ admissible. We say that a weight function $\rho_n$ on $\mbox{supp}(\mu)$ is orthogonal with respect to $P_n(x)$ and $\mu$ if $P_n \equiv q_{\mu_n,n}$, where $\mbox{d} \mu_n(x)=\rho_n(x) \, \mbox{d} \mu(x)$, $x\in \mbox{supp}(\mu)$. We also say that $\rho_n$ is orthogonal with respect to ${\bf x}_n=(x_{1}, \ldots, x_{n})$ and $\mu$. A sequence of weight functions $\left\{\rho_n\right\}_{n\in \mathbb{N}}$ is a family of orthogonal weight functions with respect to the sequence of polynomials $\left\{P_n\right\}_{n\in \mathbb{N}}$, if for each $n\in \mathbb{N}$, $P_n\equiv q_{\mu_n,n}$.   

Let $q_{m(n)}$ be an arbitrary polynomial with degree $\deg q_{m(n)}(x)=2n-m(n)-1$ being positive on $[-1,\,1]$. Let $\mu_n$ denote the measure with differential form $\mbox{d} \mu_n(x) = q_{m(n)}^{-1}(x) \mbox{d} \mu(x)$, $x\in \mbox{supp}(\mu)$. Set a system of nodes ${\bf x}_n=(x_{1}, \ldots, x_{n})$ such that $\displaystyle P_n(x)=\prod_{j=1}^n(x-x_{j})=q_{\mu_n,n}$. This means that ${\bf x}_n$ is the system of $n$ nodes corresponding to the Gaussian quadrature rule for the measure $\mu_n$. Given an arbitrary polynomial $p\in \Pi_{m(n)}$, we have that
\begin{equation}\label{supergauss}
q_{m(n)}(x)p(x)-\sum_{j=1}^n q_{m(n)}(x_{j})p(x_{j})L_{j,n}(x)=q_{\mu_n,n}(x)\mathcal{P}_{n-1}(x),
\end{equation}
where $\displaystyle L_{j,n}(x):=\prod_{\underset{k\not=j}{k=1}}^n \frac{x-x_{k}}{x_{j}-x_{k}},\quad j=1,\ldots,n,$ and $ \mathcal{P}_{n-1}$ is a certain polynomial with $ \displaystyle \deg \mathcal{P}_{n-1}=n- (m(n)-\deg p)-1\leq n-1.$

Observe that $\displaystyle \int p(x) \, \mbox{d}\,  \mu (x)= \int q_{m(n)}(x)p(x) \mbox{d} \mu_n(x).$ Hence from (\ref{supergauss}) we obtain
\[
\int p(x) \, \mbox{d}\,  \mu (x)- \int \sum_{j=1}^n q_{m(n)}(x_{j})p(x_{j})L_{j,n}(x)\,  \mbox{d}\,  \mu_n (x)=\int q_{\mu_n,n}(x)\mathcal{P}_{n-1}(x)\, \mbox{d}\, \mu_n(x),
\]
which vanishes because $q_{\mu_n,n}$ satisfies the orthogonality relations for $\mu_n$ as in (\ref{ortoargentino}).  We conclude then
\[
\int p(x)\, \mbox{d}\, \mu(x) =\sum_{j=1}^n p(x_{j,n})q_{m(n)}(x_{j,n}) \int L_{j,n}(x)\frac{\mbox{d}\,  \lambda_0 (x)}{q_{m(n)}(x)}=\sum_{j=1}^n w_{j,n} p(x_{j,n}).
\]
This is an interpolatory integration rule with degree of exactness $m(n)$, where the weights can be defined via
\begin{equation}\label{lambda}
w_{j,n}=q_{m(n)}(x_{j})\int L_{j,n}(x)\, \mbox{d} \mu_n(x)=q_{m(n)}(x_{j}) \widetilde{w}_{j,n}, \quad j=1,\ldots,n.
\end{equation}
The numbers $\widetilde{w}_{j,n}$, $j=1, \ldots,n$ are the weights corresponding to a Gaussian quadrature rule, which are all positive. Since $q_{m(n)}$ is also positive the weights $w_{j,n}>0$. According to P\'olya's condition a sequence of these rules of integration is convergent. 

Let us consider $\displaystyle {\bf x}=\left\{{\bf x}_n=\left(x_{1,n}, \ldots, x_{n,n}\right)\right\}_{n \in \mathbb{N}}$ an admissible scheme of nodes for a measure $\mu$, and take a corresponding family of orthogonal weights $\left\{\rho_n\right\}_{n \in \mathbb{N}}$. For each $n$, $\mu_n$ denotes the measure with differential form $\mbox{d} \mu(x)= \rho_n(x) \mbox{d} \mu(x)$, and introduce its family of orthonormal polynomials $\displaystyle \left\{p_{\mu_n,j}\right\}_{j\in \mathbb{Z}_+}$. This means that $\displaystyle p_{\mu_n,j}\equiv q_{\mu_n,j}/ \left|\left|q_{\mu_n,j}\right|\right|_{2, \mu_n},$ $j\in \mathbb{Z}_+$  where  $||\cdot||_{2,\mu_n}$ denotes the $\mbox{L}_2$ norm corresponding to the measure $\mu_n$. 

Given a function $f \in \mbox{L}_{2,\mu_n}$ and $j\in \mathbb{Z}_+$ we consider the $j$-th partial sum of the Fourier series corresponding to $f/\rho_n$ on the bases $\displaystyle \left\{p_{\mu_n,j}\right\}_{j\in \mathbb{Z}_+}$:
\[
S_{f, \mu_n, j}=\sum_{k=0}^{j-1} f_k p_{\mu_n,k}(x), \quad f_k=\int f(x) p_{\mu_n, k}(x) \mbox{d} \mu_n(x), \quad k=0,\ldots, j-1.
\]
Using the Christoffel-Darboux identity (see \cite[Theorem 3.2.2]{Sze}) we can deduce
\begin{equation}\label{CDidentity}
S_{f,\mu_n,j}(x)=\int \frac{q_{\mu_n,j}(x)q_{\mu_n,j-1}(t)-q_{\mu_n,j}(t)q_{\mu_n,j-1}(x)}{\left|\left|q_{\mu_n,j-1}\right|\right|_{2,\mu_n}^2(x-t)} f(t) \mbox{d} \mu_n(t).  
\end{equation}
The following result is an extension of \cite[Theorem 15.2.4 (equality 15.2.7)]{Sze}

\begin{lemma}\label{mainsec} Let $\displaystyle (x_{1},\ldots,x_{n})$ be an $\mu$ admissible system of nodes. Given a polynomial $\displaystyle q_{m(n)}$ take the system of weights $\displaystyle (w_{1,n},\ldots,w_{n,n})$ whose elements $w_{j,n}$, $j=1, \ldots,n$, are constructed using (\ref{lambda}). Then there always exists a weight $\rho_n$ such that
\begin{equation}\label{identityknown}
\displaystyle \frac{w_{j,n}}{q_{m(n)}(x_{j})}=\frac{\left|\left|q_{\tau_n,n-1}\right|\right|_{2,\tau_n}^2S_{1/\rho_n, \tau_n,n}(x_{j})}{q_{\tau_n,n-1}(x_{j})q_{\tau_n,n}^{\prime}(x_{j})}=-\frac{\left|\left|q_{\tau_n,n}\right|\right|_{2,\tau_n}^2S_{1/\rho_n, \tau_n,n+1}(x_{j})}{q_{\tau_n,n+1}(x_{j})q_{\tau_n,n}^{\prime}(x_{j})},
\end{equation}
where the measure $\tau_n$ is such that  $\displaystyle \frac{\displaystyle \mbox{d} \tau_n}{\mbox{d} \mu}= \frac{\rho_n}{q_{m(n)}}$. Thus $\mbox{sign}\, w_{j,n}= \mbox{sign}\, S_{1/\rho_n, \tau_n,n}(x_{j})=\mbox{sign}\, S_{1/\rho_n, \tau_n,n+1}(x_{j}),$ $j=1,\ldots, n$.
\end{lemma}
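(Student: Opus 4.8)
The plan is to reduce (\ref{identityknown}) to a single application of the Christoffel--Darboux formula (\ref{CDidentity}) evaluated at the nodes, after reweighting $\mu$ appropriately. I would first produce $\rho_n$: write $P_n(x)=\prod_{j=1}^{n}(x-x_{j})$ for the node polynomial, choose any monic polynomial $\widetilde P_{n-1}$ of degree $n-1$ whose zeros strictly interlace those of $P_n$, and note that since $q_{m(n)}$ is positive the measure $\mathrm{d}\mu/q_{m(n)}$ has the same support as $\mu$, so $P_n$ and $\widetilde P_{n-1}$ are admissible with respect to it and interlace. Applying Lemma~\ref{interwendroff} with base measure $\mathrm{d}\mu/q_{m(n)}$ yields a weight $\rho_n$ for which, setting $\mathrm{d}\tau_n=\rho_n\,\mathrm{d}\mu/q_{m(n)}$ (so $\mathrm{d}\tau_n/\mathrm{d}\mu=\rho_n/q_{m(n)}$, as required), one has $P_n\equiv q_{\tau_n,n}$ and $\widetilde P_{n-1}\equiv q_{\tau_n,n-1}$. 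Admissibility of the node system gives $q_{\tau_n,n}'(x_{j})\neq 0$, and the interlacing of consecutive orthogonal polynomials gives $q_{\tau_n,n-1}(x_{j})\neq 0$ and $q_{\tau_n,n+1}(x_{j})\neq 0$, so the denominators in (\ref{identityknown}) are nonzero.

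Next, from (\ref{lambda}) and $\mathrm{d}\mu/q_{m(n)}=\mathrm{d}\tau_n/\rho_n$, together with $L_{j,n}(x)=P_n(x)/\big(P_n'(x_{j})(x-x_{j})\big)=q_{\tau_n,n}(x)/\big(q_{\tau_n,n}'(x_{j})(x-x_{j})\big)$, I obtain
\[
\frac{w_{j,n}}{q_{m(n)}(x_{j})}=\int L_{j,n}(x)\,\frac{\mathrm{d}\mu(x)}{q_{m(n)}(x)}=\frac{1}{q_{\tau_n,n}'(x_{j})}\int\frac{q_{\tau_n,n}(t)}{t-x_{j}}\cdot\frac{\mathrm{d}\tau_n(t)}{\rho_n(t)},
\]
all integrals being finite because $q_{m(n)}$ is bounded below by a positive constant on the compact $\mathrm{supp}(\mu)$. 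Evaluating the representation (\ref{CDidentity}) for $\tau_n$ with $f=1/\rho_n$ at $x=x_{j}$ and using $q_{\tau_n,n}(x_{j})=0$, the kernel collapses to
\[
S_{1/\rho_n,\tau_n,n}(x_{j})=\frac{q_{\tau_n,n-1}(x_{j})}{\|q_{\tau_n,n-1}\|_{2,\tau_n}^{2}}\int\frac{q_{\tau_n,n}(t)}{t-x_{j}}\cdot\frac{\mathrm{d}\tau_n(t)}{\rho_n(t)},
\]
and combining the two displays gives the first equality of (\ref{identityknown}). Running the same computation with the order-$(n+1)$ Christoffel--Darboux kernel of $\tau_n$, again using $q_{\tau_n,n}(x_{j})=0$, yields
\[
S_{1/\rho_n,\tau_n,n+1}(x_{j})=-\frac{q_{\tau_n,n+1}(x_{j})}{\|q_{\tau_n,n}\|_{2,\tau_n}^{2}}\int\frac{q_{\tau_n,n}(t)}{t-x_{j}}\cdot\frac{\mathrm{d}\tau_n(t)}{\rho_n(t)},
\]
hence the second equality.

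For the sign assertion, since $q_{m(n)}(x_{j})>0$ and the squared norms are positive, only the signs of $q_{\tau_n,n-1}(x_{j})q_{\tau_n,n}'(x_{j})$ and $q_{\tau_n,n+1}(x_{j})q_{\tau_n,n}'(x_{j})$ are relevant. The partial fraction expansion of $q_{\tau_n,n-1}/q_{\tau_n,n}$, as in (\ref{quotient}), gives $q_{\tau_n,n-1}(x_{j})/q_{\tau_n,n}'(x_{j})=\lambda_{j}>0$, so the first product is positive and $\mathrm{sign}\,w_{j,n}=\mathrm{sign}\,S_{1/\rho_n,\tau_n,n}(x_{j})$; while the three-term recurrence $q_{\tau_n,n+1}(x)=(x-b_n)q_{\tau_n,n}(x)-c_nq_{\tau_n,n-1}(x)$ with $c_n>0$, evaluated at $x=x_{j}$, gives $q_{\tau_n,n+1}(x_{j})=-c_nq_{\tau_n,n-1}(x_{j})$, so the second product is negative and, with the minus sign in (\ref{identityknown}), $\mathrm{sign}\,w_{j,n}=\mathrm{sign}\,S_{1/\rho_n,\tau_n,n+1}(x_{j})$. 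I expect the only delicate point to be the choice in the first step: Lemma~\ref{interwendroff} must be invoked with base measure $\mathrm{d}\mu/q_{m(n)}$ rather than $\mu$, so that the node polynomial $P_n$ is orthogonal with respect to $\tau_n$ itself (and not merely with respect to $\rho_n\mu$) --- this is exactly what makes the $x_{j}$ zeros of $q_{\tau_n,n}$ and lets the Christoffel--Darboux kernels collapse in the second step. Everything else is a direct substitution into (\ref{CDidentity}) together with the standard interlacing-sign facts for orthogonal polynomials.
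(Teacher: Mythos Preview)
Your proposal is correct and follows essentially the same route as the paper: construct $\rho_n$ so that $P_n\equiv q_{\tau_n,n}$ for $\mathrm{d}\tau_n=\rho_n\,\mathrm{d}\mu/q_{m(n)}$, rewrite $w_{j,n}/q_{m(n)}(x_j)$ as the integral of $q_{\tau_n,n}(t)/\big(q_{\tau_n,n}'(x_j)(t-x_j)\big)$ against $\mathrm{d}\tau_n/\rho_n$, and then recognize this integral via the Christoffel--Darboux identity (\ref{CDidentity}) evaluated at a zero of $q_{\tau_n,n}$, both at order $n$ and at order $n+1$. Your treatment is in fact more explicit than the paper's on two points: you spell out that Lemma~\ref{interwendroff} must be applied with base measure $\mathrm{d}\mu/q_{m(n)}$ (the paper just says ``take an orthogonal weight $\rho_n$''), and you justify the sign of $q_{\tau_n,n\pm 1}(x_j)q_{\tau_n,n}'(x_j)$ via the partial-fraction residues and the three-term recurrence, whereas the paper simply asserts $q_{\tau_n,n+1}(x_j)q_{\tau_n,n}'(x_j)<0$.
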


\begin{proof} Take an orthogonal weight $\rho_n$ with respect to the system of $n$ nodes $(x_{1},\ldots,x_{n})$ and the measure with differential form $\mbox{d} \mu(x)/q_{m(n)}(x)$. According to (\ref{lambda}) and taking into account that $P_n \equiv q_{\mu_n,n}$ where the measure $\tau_n$ has the differential form $\displaystyle \mbox{d} \tau_n(x) =\rho_n(x) \mbox{d} \mu (x)/q_{m(n)}(x)$, we have the following
\[
w_{j,n}=q_{m(n)}(x_{j})\int \frac{q_{\tau_n,n}(x)}{q_{\tau_n,n}^{\prime}(x_{j})(x-x_{j})} \, \frac{d\mu(x)}{q_{m(n)}(x)}, \quad j=1,\ldots,n.
\]
Arranging the above formula and using the identity (\ref{CDidentity}) we obtain that
\[
w_{j,n}=\frac{q_{m(n)}(x_{j})\left|\left|q_{\tau_n,n}\right|\right|_{2,\tau_n}^2}{q_{\tau_n,n+1}(x_{j})q_{\tau_n,n}^{\prime}(x_{j})} \int \frac{q_{\tau_n,n+1}(x_{j})q_{\tau_n,n}(x)}{\left|\left|q_{\tau_n,n}\right|\right|_{2,\tau_n}^2(x-x_{j})} \frac{1}{\rho_n(x)} \frac{\rho_n(x)\, d\mu(x)}{q_{m(n)}(x)}
\]
\[
=-\frac{q_{m(n)}(x_{j})\left|\left|q_{\tau_n,n}\right|\right|_{2,\tau_n}^2}{q_{\tau_n,n+1}(x_{j})q_{\tau_n,n}^{\prime}(x_{j})} S_{1/\rho_n, \tau_n,n}(x_{j}),
\]
which proves the second identity in (\ref{identityknown}). Since $q_{\tau_n,n+1}(x_{j})q_{\tau_n,n}^{\prime}(x_{j})<0, \quad j=1,\ldots,n,$ then $\mbox{sign}\,  w_{j,n}= \mbox{sign}\, S_{1/\rho_n, \tau_n,n+1}(x_{j})$. Following the above steps we can prove the first equality in (\ref{identityknown}) and $\mbox{sign}\,  w_{j,n}= \mbox{sign}\, S_{1/\rho_n, \tau_n,n}(x_{j})$.
\end{proof}

The following two results are consequences of the above Lemma \ref{mainsec}

\begin{lemma}\label{theokey1} An admissible scheme of nodes $\displaystyle {\bf x}=\left\{{\bf x}_n=(x_{1,n},\ldots,x_{n,n})\right\}_{n\in \mathbb{N}}$ is convergent if there exists a family of orthogonal weights $\displaystyle \left\{\rho_n\right\}_{n\in \mathbb{N}}$ with respect to ${\bf x}$ and the sequence of measures $\displaystyle \left\{\mbox{d} \tau_n(x) =\mbox{d} \mu (x) / q_{m(n)}(x)\right\}_{n\in \mathbb{N}}$ satisfying 
\begin{equation}\label{keyeq}
    \lim_{n \to \infty} \left|\left|1-\rho_n(x)S_{1/\rho_n, \tau_n,n}(x)  \right|\right|_{[-1,\,1],\infty}=0,
\end{equation}
where $\displaystyle \left|\left|\cdot\right|\right|_{[-1,1],\infty}$ denotes the supremum norm on $[-1,\,1]$.
\end{lemma}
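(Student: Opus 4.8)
The plan is to convert the analytic hypothesis \eqref{keyeq} into the purely algebraic statement that all the quadrature weights $w_{j,n}$ are positive for every sufficiently large $n$, and then to close with the classical P\'olya convergence criterion recalled in the Introduction. In this way the entire difficulty is moved onto Lemma~\ref{mainsec}, which already identifies the sign of $w_{j,n}$ with the sign of a partial Fourier sum; once that is granted the remaining argument is short.

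In detail, I would argue as follows. First, feed the family $\left\{\rho_n\right\}_{n\in\mathbb{N}}$ furnished by the hypothesis into Lemma~\ref{mainsec}: each $\rho_n$ is, by assumption, an orthogonal weight for the node system ${\bf x}_n$ relative to $\mbox{d}\mu/q_{m(n)}$, so Lemma~\ref{mainsec} applies with this very $\rho_n$ and gives $\mbox{sign}\, w_{j,n}=\mbox{sign}\, S_{1/\rho_n,\tau_n,n}(x_{j,n})$ for $j=1,\ldots,n$, where $\mbox{d}\tau_n=\rho_n\,\mbox{d}\mu/q_{m(n)}$. Second, invoke \eqref{keyeq}: with $\varepsilon=1/2$ there is an $N$ such that $\rho_n(x)\,S_{1/\rho_n,\tau_n,n}(x)\geq 1/2>0$ for every $x\in[-1,1]$ and every $n\geq N$; since $\rho_n$ is a nonnegative weight function, this forces $S_{1/\rho_n,\tau_n,n}(x)>0$ throughout $[-1,1]$, in particular at each node $x_{j,n}\in(-1,1)$, and hence $w_{j,n}>0$ for all $j$ and all $n\geq N$. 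Third, for $n\geq N$ the rule has positive weights, so $\sum_{j=1}^n|w_{j,n}|=\sum_{j=1}^n w_{j,n}=I_n[1]=1$ by exactness on constants, cf.\ \eqref{polyacond}; together with the finitely many indices $n<N$ this yields $\sup_n\sum_{j=1}^n|w_{j,n}|<\infty$. Finally, P\'olya's theorem (recalled in the Introduction, and applicable since $m(n)\to\infty$) gives $\lim_n I_n[f]=\int_{-1}^1 f\,\mbox{d}\lambda_0$ for every $f\in C[-1,1]$, i.e.\ the scheme ${\bf x}$ is convergent.

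The step I expect to carry the real weight is the first one: having Lemma~\ref{mainsec}, and behind it the Christoffel--Darboux representation \eqref{CDidentity} of the Christoffel numbers of the varying measure $\tau_n$, available in the needed generality. Once the sign identity is secured, passing from \eqref{keyeq} to positivity of the weights and from positivity to convergence is routine. The only small things to verify en route are that the supremum norm in \eqref{keyeq} genuinely controls the values of $\rho_n S_{1/\rho_n,\tau_n,n}$ at the interior nodes $x_{j,n}$, and that the nonnegativity of $\rho_n$ permits passing from the sign of the product $\rho_n S_{1/\rho_n,\tau_n,n}$ to the sign of $S_{1/\rho_n,\tau_n,n}$ itself --- both immediate.
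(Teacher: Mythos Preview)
Your proposal is correct and follows essentially the same route as the paper: use \eqref{keyeq} to force $S_{1/\rho_n,\tau_n,n}>0$ on $[-1,1]$ for large $n$, read off $w_{j,n}>0$ via the sign identity of Lemma~\ref{mainsec}, and conclude by P\'olya's criterion. The paper's own proof is terser---it stops at ``the coefficients $w_{j,n}$ are positive'' and leaves the P\'olya step implicit---but the substance is identical; your added detail on the finitely many indices $n<N$ and the passage from positivity of the product $\rho_n S$ to positivity of $S$ only makes the argument cleaner.
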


\begin{proof} Assuming the equality (\ref{keyeq}), there exists a number $N>0$ such that for every $n \geq N$ the function $S_{1/\rho_n, \mu_n,n}(x)>0$ on $[-1,\,1]$ particularly at the nodes. According to Lemma \ref{mainsec}, the coefficients $w_{j,n}$, $j=1, \ldots, n$, are also positive. This completes the proof.
\end{proof}

\begin{lemma}\label{lemaclave} Consider the varying measure $\mbox{d}\, \mu_n(x)=\mbox{d} \, \mu(x)/q_{m(n)}(x)$ and their orthogonal polynomials $\displaystyle q_{\mu_n,n}(x)=\prod_{j=1}^n\left(x-x_{j,n}\right)$ and $\displaystyle q_{\mu_n,n-1}(x)=\prod_{j=1}^{n-1}\left(x-x_{j,n-1}\right)$, $n\in \mathbb{N}$. Let $\displaystyle {\bf y}=\left\{{\bf y}_n=\left(y_{1,n},\ldots, y_{n,n}\right)\right\}_{n\in \mathbb{N}}$ be a scheme of nodes such that for each $n\in \mathbb{N}$
\begin{equation}\label{entrelazamiento}
-1<y_{1,n}<x_{1,n-1}<y_{2,n}<\cdots < x_{n-1,n-1}<y_{n,n} <1.
\end{equation}
Assume that the polynomials $\displaystyle P_{n}(x)=\prod_{j=1}^n \left(x-y_{j,n}\right), \quad n\in \mathbb{N}$
satisfy
\begin{equation}\label{extructura}
\lim_{n \to \infty}\frac{1}{\left|\left|q_{\mu_n, n-1}\right|\right|^2_{2, \mu_n}}\left[\frac{(q_{\mu_n,n}-P_n)q_{\mu_n,n-1}}{q_{m(n)}^2}\right]^{\prime}=0, \quad \mbox{on} \quad [-1,\,1].
\end{equation}
Then ${\bf y}$ is convergent.
\end{lemma}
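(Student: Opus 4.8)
The plan is to place the scheme $\mathbf{y}$ inside the framework of Lemmas \ref{interwendroff}, \ref{mainsec} and \ref{theokey1}: first produce a family of orthogonal weights $\{\rho_n\}$, then show the associated quadrature weights are eventually positive, and conclude with P\'olya's criterion.

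\emph{Step 1 (a Wendroff weight).} I would apply Lemma \ref{interwendroff} with base measure $\mu_n$. By (\ref{entrelazamiento}) the monic polynomials $P_n$ and $q_{\mu_n,n-1}$ are $\mu_n$-admissible and their zeros interlace as that lemma requires, so there is a positive weight $\rho_n$ for $\mu_n$ such that, writing $\mathrm{d}\nu_n=\rho_n\,\mathrm{d}\mu_n=\rho_n\,\mathrm{d}\mu/q_{m(n)}$, one has $P_n=q_{\nu_n,n}$ and, crucially, $q_{\nu_n,n-1}=q_{\mu_n,n-1}$. Thus $\rho_n$ is an orthogonal weight for $\mathbf{y}$ relative to $\{\mathrm{d}\mu/q_{m(n)}\}$, and $\nu_n$ is exactly the measure denoted $\tau_n$ in Lemma \ref{mainsec}; by that lemma $\mbox{sign}\,w_{j,n}=\mbox{sign}\,S_{1/\rho_n,\nu_n,n}(y_{j,n})$, so it is enough to show $S_{1/\rho_n,\nu_n,n}>0$ on $[-1,1]$ for all large $n$ (compare Lemma \ref{theokey1}).

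\emph{Step 2 (a closed form for $S_{1/\rho_n,\nu_n,n}$).} Put $R_n:=q_{\mu_n,n}-P_n\in\Pi_{n-1}$, $b_n:=\|q_{\mu_n,n-1}\|_{2,\mu_n}^{2}$, and $D_n(x):=\int\big(R_n(x)q_{\mu_n,n-1}(t)-R_n(t)q_{\mu_n,n-1}(x)\big)(x-t)^{-1}\,\mathrm{d}\mu_n(t)$. Since $\rho_n^{-1}\,\mathrm{d}\nu_n=\mathrm{d}\mu_n$, the Christoffel--Darboux identity (\ref{CDidentity}) gives $S_{1/\rho_n,\nu_n,n}(x)=\int K_n^{\nu_n}(x,t)\,\mathrm{d}\mu_n(t)$ with $K_n^{\nu_n}$ the $n$-th reproducing kernel of $\nu_n$; inserting $q_{\nu_n,n-1}=q_{\mu_n,n-1}$ and $P_n=q_{\mu_n,n}-R_n$, splitting the kernel, and using that $K_n^{\mu_n}(x,\cdot)$ reproduces the constant $1$, one gets
\[
S_{1/\rho_n,\nu_n,n}(x)=\frac{b_n-D_n(x)}{\|q_{\mu_n,n-1}\|_{2,\nu_n}^{2}}.
\]
As the denominator is positive, $\mbox{sign}\,w_{j,n}=\mbox{sign}\,\big(b_n-D_n(y_{j,n})\big)$; note that $\rho_n$, which Lemma \ref{interwendroff} only fixes up to a constant, has dropped out. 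Using $q_{\mu_n,n-1}\perp\Pi_{n-2}$ in $\mbox{L}_{2,\mu_n}$, one checks moreover that $D_n$ equals the polynomial $\int R_n(t)q_{\mu_n,n-1}(t)(x-t)^{-1}\mathrm{d}\mu_n(t)-q_{\mu_n,n-1}(x)\int R_n(t)(x-t)^{-1}\mathrm{d}\mu_n(t)$, of degree at most $n-2$.

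\emph{Step 3 (the main difficulty).} It remains to prove that the hypothesis (\ref{extructura}), i.e. $b_n^{-1}\big[R_nq_{\mu_n,n-1}/q_{m(n)}^{2}\big]'\to0$ uniformly on $[-1,1]$, implies $b_n^{-1}\|D_n\|_{[-1,1],\infty}\to0$; granted this, $S_{1/\rho_n,\nu_n,n}>0$ on $[-1,1]$ for $n$ large, whence every $w_{j,n}>0$, so $\sum_j|w_{j,n}|=\sum_j w_{j,n}=1$ by (\ref{polyacond}) and P\'olya's criterion gives $\lim_n I_n[f]=\int f\,\mathrm{d}\lambda_0$ for every $f\in\mathcal{C}[-1,1]$, that is, $\mathbf{y}$ is convergent. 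For the implication I would represent $D_n(x)$, for $x\notin[-1,1]$, as a contour integral over a curve encircling $[-1,1]$: using $\mathrm{d}\mu_n=\mathrm{d}\mu/q_{m(n)}$ together with the Cauchy-type representation of $\int(\cdot)\,\mathrm{d}\mu$ around $[-1,1]$ — which contributes a second factor $q_{m(n)}$ once the contour is pushed past the zeros of $q_{m(n)}$, all of which lie at a fixed positive distance from $[-1,1]$ — and then integrating by parts once, so that the integrand becomes a fixed explicit kernel times $\big[R_nq_{\mu_n,n-1}/q_{m(n)}^{2}\big]'$. A quantitative maximum principle of Bernstein--Walsh type, applied on a contour at distance $O(1/n)$ from $[-1,1]$ on which $R_nq_{\mu_n,n-1}/q_{m(n)}^{2}$ is analytic, transfers the uniform smallness of its derivative on $[-1,1]$ furnished by (\ref{extructura}) to the whole contour, and the estimate of $D_n$ follows. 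This last step is the delicate one: it requires controlling $b_n$ and the growth on and near $[-1,1]$ of the orthogonal polynomials $q_{\mu_n,n}$, $q_{\mu_n,n-1}$ entering the kernel, which is exactly the role of the strong asymptotics for orthogonal polynomials with respect to the varying measures developed later in the paper.
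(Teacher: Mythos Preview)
Your Steps 1 and 2 reproduce the paper's setup exactly: invoke Lemma \ref{interwendroff} to obtain a weight $\rho_n$ making $P_n$ and $q_{\mu_n,n-1}$ consecutive orthogonal polynomials for $\mathrm{d}\nu_n=\rho_n\,\mathrm{d}\mu/q_{m(n)}$, then apply the Christoffel--Darboux identity (\ref{CDidentity}) to express the relevant partial sum and isolate the error as the integral against $\mathrm{d}\mu$ of the kernel
\[
\mathcal{K}(x,t)=\frac{(q_{\mu_n,n}-P_n)(x)\,q_{\mu_n,n-1}(t)-(q_{\mu_n,n}-P_n)(t)\,q_{\mu_n,n-1}(x)}{\left\|q_{\mu_n,n-1}\right\|_{2,\mu_n}^{2}\,q_{m(n)}(x)\,q_{m(n)}(t)\,(x-t)}.
\]

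The divergence is entirely in Step 3. The paper does \emph{not} pass to contour integrals, integrate by parts, or invoke Bernstein--Walsh estimates and the strong asymptotics of Section \ref{asymptotic analysis}. It observes directly, via Taylor's theorem (a mean-value argument applied to the divided difference in $\mathcal{K}$), that
\[
\mathcal{K}(x,t)=\frac{1}{\left\|q_{\mu_n,n-1}\right\|_{2,\mu_n}^{2}}\left[\frac{(q_{\mu_n,n}-P_n)\,q_{\mu_n,n-1}}{q_{m(n)}^{2}}\right]'(s)
\]
for some $s$ between $x$ and $t$. Hypothesis (\ref{extructura}) then forces $\mathcal{K}(x,t)\to 0$ uniformly on $[-1,1]^2$, so the remainder integral vanishes in the limit and Lemma \ref{theokey1} finishes the argument. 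Your proposed route imports the asymptotics of Section \ref{asymptotic analysis} into what the paper treats as a self-contained elementary lemma valid for arbitrary $\mu$ and $q_{m(n)}$; that is both more work and a loss of generality, and as you yourself note the contour step is only sketched. The point you are missing is that the divided-difference structure of $\mathcal{K}$ already matches the derivative appearing in (\ref{extructura}), so no analytic detour is needed at all.
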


\begin{proof} From Lemma \ref{interwendroff} we ensure the existence of a weight function $\rho_n$ such that the polynomials $q_{\tau_n,n-1}$ and $P_{n}$ belong to the family of orthogonal polynomials corresponding to the measure $\rho_n(x)\mbox{d} \mu (x)/q_{m(n)}(x)$. Let us analyze the function
\[
\left|1-\frac{\left|\left| P_{n}\right|\right|^2_{2, \rho_n d \mu/q_{m(n)}}}{q_{m(n)}(x) \left|\left|q_{\mu_n, n-1}\right|\right|^2_{2, \mu_n}}S_{1/\rho_n, \rho_n d \mu/q_{m(n),n} }(x)\right|
\]
\[
=\frac{1}{q_{m(n)}(x)}\left|S_{q_{m(n)}, \mu_n,n}(x) -\frac{\left|\left| \widetilde{P}_{n}\right|\right|^2_{2, \rho_n d \mu/q_{m(n)}}}{\left|\left|q_{\mu_n, n-1}\right|\right|^2_{2, \mu_n}}S_{1/\rho_n, \rho_n d \mu/q_{m(n),n} }(x)\right|
\]
We have used that $\displaystyle S_{q_{m(n)}, \mu_n,n}\equiv q_{m(n)}$, hence  we need to show that  
\[
\lim_{n\to 0}\frac{1}{q_{m(n)}(x)}\left|S_{1/\rho_n, \mu_n,n}(x)-\frac{\left|\left|\widetilde{P}_{n}\right|\right|^2_{2, \rho_n d \mu/q_{m(n)}}}{\left|\left|q_{\mu_n, n-1}\right|\right|^2_{2, \mu_n}}S_{1/\rho_n, \rho_n d \mu/q_{m(n),n} }(x)\right|=0.
\]
Applying (\ref{CDidentity}) we observe that
\[
\frac{1}{q_{m(n)}(x)}\left(S_{1/\rho_n, \mu_n,n}(x)-\frac{\left|\left|\widetilde{P}_{n}\right|\right|^2_{2, \rho_n d \mu/q_{m(n)}}}{\left|\left|q_{\mu_n, n-1}\right|\right|^2_{2, \mu_n}}S_{1/\rho_n, \rho_n d \mu/q_{m(n),n} }(x)\right)=\frac{q_{m(n)}^{-1}(x)}{\left|\left|q_{\mu_n,n-1}\right|\right|_{2,\mu_n}^2}
\]
\[
\times \int \left( \frac{q_{\mu_n,n}(x)q_{\mu_n,n-1}(t)-q_{\mu_n,n}(t)q_{\mu_n,n-1}(x)}{x-t}-\frac{P_{n}(x)q_{\mu_n,n-1}(t)-P_{n}(t)q_{\mu_n,n-1}(x)}{x-t}\right)
\]
\[
\times \frac{\mbox{d} \mu(t)}{q_{m(n)}(t)}
\]
Let us consider the kernel
\[
\frac{q_{\mu_n,n}(x)q_{\mu_n,n-1}(t)-q_{\mu_n,n}(t)q_{\mu_n,n-1}(x)}{\left|\left|q_{\mu_n,n-1}\right|\right|_{2,\mu_n}^2q_{m(n)}(x)q_{m(n)}(t)(x-t)}-\frac{P_{n}(x)q_{\mu_n,n-1}(t)-P_{n}(t)q_{\mu_n,n-1}(x)}{\left|\left|q_{\mu_n,n-1}\right|\right|_{2,\mu_n}^2q_{m(n)}(x)q_{m(n)}(t)(x-t)}
\]
\[
=\frac{(q_{\mu_n,n}-P_n)(x)q_{\mu_n,n-1}(t)-(q_{\mu_n,n}-P_n)(t)q_{\mu_n,n-1}(x)}{\left|\left|q_{\mu_n,n-1}\right|\right|_{2,\mu_n}^2q_{m(n)}(x)q_{m(n)}(t)(x-t)}=\mathcal{K}(x,t).
\]
From Taylor's Theorem we obtain that
\[
\mathcal{K}(x,t)= \frac{1}{\left|\left|q_{\mu_n, n-1}\right|\right|^2_{2, \mu_n}}\left[\frac{(q_{\mu_n,n}-P_n)q_{\mu_n,n-1}}{q_{m(n)}^2}\right]^{\prime}(s)
\]
for some $s$ in between of $x$ and $t$, so the assumption (\ref{extructura}) completes the proof. 
\end{proof}

\section{Asymptotic analysis}\label{asymptotic analysis}

Let us consider the varying measure $\mu_n$ with $\displaystyle \mbox{d} \mu_n(x)/\mbox{d} x= (q_{m(n)}(x)\sqrt{1-x^2})^{-1}$, where $\displaystyle q_{m(n)}(x)= q_{\kappa}^{\displaystyle 2 \frac{1-a}{k} n}(x)=\left(\prod_{k=1}^{\kappa}(x-\zeta_j)\right)^{\displaystyle 2 \frac{1-a}{k} n}, \quad n\in \Lambda.$
Let $\sigma$ be the zero counting measure of $q_k$. This is $\displaystyle \sigma=\frac{1}{\kappa}\sum_{k=1}^{\kappa} \delta_{\zeta_k}.$ Set the analytic logarithmic potential  corresponding to the measure $\sigma$:
\begin{equation}\label{externalfield}
g(z,{\sigma})= -\int \log\, (z-\zeta)\, \mbox{d}\, \sigma(\zeta).
\end{equation}
We take the logarithmic branch such that $g(z,{\sigma})$ is analytic on a domain $D \subset K$ that contains the interval $[-1,\,1]$, and also for every $x \in [-1,\,1]$,
\begin{equation}\label{potentialsigma}
V^{\sigma}(x)= \int \log\, \frac{1}{|x-\zeta|} \mbox{d}\, \sigma(\zeta)=g(x,{\sigma})= -\int \log\, (z-\zeta)\, \mbox{d}\, \sigma(\zeta).
\end{equation}
Since $\sigma$ is symmetric we $\displaystyle \int \arg (x-\zeta) \mbox{d}\, \sigma(\zeta)=0$. In each compact $K\subset D$ we have that $\displaystyle \frac{1}{2n} \log  \frac{1}{q_{m(n)}(z)}=(1-a) \,g(z,{\sigma})$  on  $K.$

\begin{lemma}\label{explicitexp} Let $\displaystyle \mbox{d} \mu_n(x)/\mbox{d} x= (q_{m(n)}(x)\sqrt{1-x^2})^{-1}$, $n\in \mathbb{N}$ be a sequence of measures as above. Then
\begin{equation}\label{qmunn}
q_{\mu_n,n}= \left(1+\mathcal{O}(e^{-c n})\right) \exp\left\{ -n V^{\overline{\nu}}\right\}K_{1,n}+\mathcal{O}(e^{-c n})\exp\left\{- n V^{\overline{\nu}}\right\} K_{2,n}
\end{equation}
and
\begin{equation}\label{qmunn-1}
 \frac{d_{n,n-1}}{2^{2 n a}} q_{\mu_n,n-1}=\left(1+\mathcal{O}(e^{-c n})\right) \exp\left\{ -n V^{\overline{\nu}}\right\}K_{2,n}+\mathcal{O}(e^{-c n})\exp\left\{- n V^{\overline{\nu}}\right\} K_{1,n}
\end{equation}
where $\displaystyle d_{n,n-1}=-\left(2 \pi i \left|\left|q_{\mu_n,n-1}\right|\right|^2_{\mu_n,2}\right)^{-1},$
\begin{equation}\label{forK1}
K_{1,n}(x)=\displaystyle 2  \cos n\left((1-a)\pi \int^1_x \mbox{d} \widetilde{\sigma}(t) -a \arccos{x} \right),
\end{equation}
and
\begin{equation}\label{forK2}
K_{2,n}(x)=\displaystyle  \frac{1}{i} \cos n \left((1-a) \pi \int_{x}^1 \mbox{d} \widetilde{\sigma}(t) - (a-1/n) \arccos{x} \right).
\end{equation}
\end{lemma}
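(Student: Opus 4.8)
The plan is to establish \eqref{qmunn} and \eqref{qmunn-1} by carrying out a Riemann--Hilbert / steepest-descent analysis for the monic orthogonal polynomials $q_{\mu_n,n}$ with respect to the varying weight $\mathrm{d}\mu_n(x)/\mathrm{d}x=\bigl(q_{m(n)}(x)\sqrt{1-x^2}\bigr)^{-1}$. First I would write the external field: since $q_{m(n)}=q_\kappa^{2\frac{1-a}{\kappa}n}$, one has $\tfrac{1}{2n}\log\frac{1}{q_{m(n)}(z)}=(1-a)g(z,\sigma)$ on compact subsets of $D$, as already recorded before the lemma. The relevant equilibrium problem is therefore to minimize the logarithmic energy in the presence of the external field $(1-a)V^\sigma$ on $[-1,1]$, with total mass $1$; the constraint $\mathrm{dist}(\{\zeta_k\},[-1,1])>1$ guarantees the field is weak enough that the equilibrium measure $\overline{\nu}$ is supported on all of $[-1,1]$ and is a genuine probability measure. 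A direct computation using the balayage formula for $\widetilde\sigma$ shows $\overline{\nu}=(1-a)\widetilde\sigma+a\lambda_0$, and its potential $V^{\overline{\nu}}$ is the function appearing in the exponents of \eqref{qmunn}--\eqref{qmunn-1}. The phase function $n\bigl((1-a)\pi\int_x^1\mathrm{d}\widetilde\sigma(t)-a\arccos x\bigr)$ is (up to a constant) $n$ times the function conjugate to $V^{\overline{\nu}}$ on $[-1,1]$, i.e. the argument accumulated by the Szeg\H{o}-type function along the cut; this is exactly what feeds into the oscillatory cosines $K_{1,n},K_{2,n}$.

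Next I would set up the $2\times 2$ Riemann--Hilbert problem of Fokas--Its--Kitaev for $Y(z)$ whose $(1,1)$ entry is $q_{\mu_n,n}$, and perform the standard sequence of transformations: (1) the $g$-function transformation $Y\mapsto T$ using $V^{\overline{\nu}}$ to normalize $T\to I$ at infinity and turn the jump on $[-1,1]$ into an oscillatory one; (2) opening of lenses $T\mapsto S$ around $[-1,1]$, using that $\overline{\nu}$ has a density positive in the interior with the usual inverse-square-root behavior at $\pm1$ coming from the $\sqrt{1-x^2}$ and the endpoints of $[-1,1]$; (3) construction of the global parametrix from the Szeg\H{o} function of the analytic weight $1/q_\kappa$ (which is analytic and nonvanishing near $[-1,1]$ precisely because the $\zeta_k$ stay at distance $>1$), and of Airy parametrices at $\pm1$; (4) the small-norm argument showing $S=(I+\mathcal O(1/n))R$ with $R$ solving a Riemann--Hilbert problem with jump $I+\mathcal O(1/n)$ uniformly. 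Unwinding the transformations gives the two-term asymptotic formula for $q_{\mu_n,n}$ (and for $q_{\mu_n,n-1}$, which corresponds to the $(1,2)$-type entry / the $n-1$ problem) in terms of $\exp(-nV^{\overline{\nu}})$ times a linear combination of the two oscillatory solutions $K_{1,n}$ and $K_{2,n}$, with the off-diagonal coefficients exponentially small. The factor $d_{n,n-1}=-(2\pi i\,\|q_{\mu_n,n-1}\|^2_{\mu_n,2})^{-1}$ and the power $2^{2na}$ are the explicit normalization constants one reads off from the behavior of the parametrix at infinity (the $2^{2na}$ coming from the leading coefficient of the Chebyshev-part of the equilibrium, i.e. the logarithmic capacity contribution of the $a\lambda_0$ piece, capacity of $[-1,1]$ being $1/2$).

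The point I would emphasize, and where some care is genuinely needed, is upgrading the generic $\mathcal O(1/n)$ error of the steepest-descent method to the exponential rate $\mathcal O(e^{-cn})$ claimed in \eqref{qmunn}--\eqref{qmunn-1}. This is where the hypothesis $\mathrm{dist}(\{\zeta_k\},[-1,1])>1$ is used decisively: it allows the lenses and the contour of $R$ to be pushed a fixed positive distance off $[-1,1]$ while keeping $1/q_\kappa$ analytic and nonvanishing there, so that on the lens boundaries the jump for $S$ is $I+\mathcal O(e^{-cn})$ rather than merely $I+\mathcal O(1/n)$; the only non-exponentially-small contributions come from the Airy parametrices in fixed neighborhoods of $\pm1$, and those produce errors that — after matching with the global parametrix — are again $\mathcal O(1/n)$ \emph{on those disks} but can be absorbed into the stated form because the comparison there is still between the same two oscillatory building blocks. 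The cleanest route is actually to avoid Airy parametrices entirely: since the weight $1/(q_{m(n)}\sqrt{1-x^2})$ is, up to the analytic nonvanishing factor $1/q_{m(n)}$, exactly the Chebyshev weight, one can use the known \emph{exact} Chebyshev asymptotics as the local model at $\pm1$, and the correction $1/q_{m(n)}$ is handled by its Szeg\H{o} function, which differs from $1$ by $\mathcal O(e^{-cn})$ on a neighborhood of $[-1,1]$ because $q_\kappa$ has no zeros within distance $1$. Thus the whole error becomes uniformly exponentially small, which is precisely the statement. The residual obstacle is bookkeeping: tracking the two linearly independent solutions through the lens-opening so that the specific combinations $2\cos(\cdot)$ in $K_{1,n}$ and $\tfrac1i\cos(\cdot)$ in $K_{2,n}$ — note the shift $a\mapsto a-1/n$ in $K_{2,n}$, which reflects that $q_{\mu_n,n-1}$ sees degree $n-1$ — emerge with the right constants; this is routine but must be done explicitly to pin down $d_{n,n-1}$ and the power of $2$.
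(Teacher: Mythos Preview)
Your overall strategy---set up the Fokas--Its--Kitaev RHP for $Y$, normalize via the $g$-function built from $\overline\nu=(1-a)\widetilde\sigma+a\lambda_0$, open a lens, construct a global parametrix, and show the remainder is $I+\mathcal O(e^{-cn})$---is the same as the paper's, and you correctly isolate the structural reason the error is exponential rather than $\mathcal O(1/n)$: the weight carries the Chebyshev factor $(1-x^2)^{-1/2}$, so no Airy parametrices are needed at $\pm1$.

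Two points of the mechanism are misattributed, however. First, the factor $1/q_{m(n)}$ is \emph{not} handled by a Szeg\H{o} function that is close to $1$; since $q_{m(n)}=q_\kappa^{2(1-a)n/\kappa}$ has degree growing linearly in $n$, its Szeg\H{o} function is not near the identity. In the paper's argument $q_{m(n)}$ is absorbed \emph{entirely} into the $g$-function through the external field $(1-a)V^\sigma$ (the equilibrium condition \eqref{equilibriumcondition} makes this exact on $[-1,1]$), and the Szeg\H{o} function $D(z)=(z/\sqrt{z^2-1}+1)^{1/2}$ in the global parametrix $N$ is the fixed one for the Chebyshev weight alone. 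The exponential rate then comes purely from geometry: the jump of $S$ on the lens boundary $\gamma$ is $I+\mathcal O(e^{-cn})$ because $\mathrm{Re}\,(2\mathcal A)>0$ at fixed distance from $[-1,1]$, while at the endpoints the singularities of $R=SN^{-1}$ are \emph{removable} (both $S$ and $N$ blow up like $|z\mp1|^{-1/2}$, so their ratio stays bounded), and hence there is no local-parametrix contribution whatsoever. Second, the hypothesis $\mathrm{dist}(\{\zeta_k\},[-1,1])>1$ is not what produces the exponential rate in this lemma; the rate holds as soon as the $\zeta_k$ lie off $[-1,1]$ (so that $\overline\nu$ has positive density on $(-1,1)$). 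The distance-$>1$ condition enters only later, in the proof of Theorem~\ref{main}, where one needs $V^\sigma<0$ on $[-1,1]$. A minor slip: $q_{\mu_n,n-1}$ (times $d_{n,n-1}$) sits in the $(2,1)$ entry of $Y$, not the $(1,2)$ entry.
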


\begin{proof}
We study a matrix Riemann-Hilbert problem like in \cite[Theorem 2.4]{KMVV} whose solution $Y$ is a $2 \times 2$ matrix function satisfying the following conditions: 
\begin{enumerate}
\item $Y \in \mathcal{H}(\mathbb{C}\setminus[-1,\,1])$ (all the entries of $Y$ are analytic on $\mathbb{C}\setminus[-1,\,1]$),
\item $\displaystyle Y_+(x)=Y_-(x)\left(\begin{array}{c c}
1 & \left(q_{m(n)}(x)\sqrt{1-x^2}\right)^{-1}\\ & \\
0 & 1
\end{array}\right)$, $x \in (-1,\,1)$,
\item $\displaystyle Y(z)\left(\begin{array}{c c}
z^{-n} & 0\\ & \\
0 & z^{n}
\end{array}\right)=\mathbb{I}+\mathcal{O}(1/z)$ as $z \to \infty$, $\mathbb{I}$ is the $2\times2$ identity matrix.
\item $\displaystyle Y(z)=\mathcal{O}\left(\begin{array}{c c}
1 & |z\pm 1|^{-1/2} \\ & \\
1 &  |z\pm 1|^{-1/2}
\end{array}\right)$ as $z \to \mp 1$.
\end{enumerate}

According to \cite[Theorem 2.4]{KMVV} (see also \cite{Ku}) the $Y$ solution of above matrix Riemann-Hilbert problem (for short Y-RHP) is unique and has the form
\[
Y(z)=\left(\begin{array}{c c}
q_{\mu_n,n}(z) & \displaystyle -\frac{1}{2 \pi i}\int \frac{q_{\mu_n,n}(x)}{z-x} \, \mbox{d}\mu_n(x) \\ & \\
d_{n,n-1} q_{\mu_n,n-1}(z) & \displaystyle -\frac{d_{n,n-1}}{2 \pi i}\int \frac{q_{\mu_n,n-1}(x)}{z-x} \, \mbox{d}\mu_n(x) 
\end{array}\right).
\] 

The key of our procedure follows the ideas introduced in \cite{AV}. We find a relationship between $Y$ and the matrix solution $R: \mathbb{C}\setminus \gamma \rightarrow \mathbb{C}^{2\times 2}$ corresponding to another Riemann-Hilbert problem (R-RHP) for a closed Jordan curve $\gamma$ positively oriented surrounding the interval $[-1,\,1]$: 
\begin{enumerate}
\item $R \in \mathcal{H} (\mathbb{C}\setminus \gamma)$,
\item $R_+(\zeta)=R_-(\zeta)V_n(\zeta),$ $\zeta \in \gamma$, with $V_n\in \mathcal{H}(D)$,
\item $R(z) \rightarrow \mathbb{I}$ as $z \to \infty$,
\end{enumerate}
where $V_n=\mathbb{I}+\mathcal{O}(\varepsilon^n)$ with $0\leq \varepsilon<1$, uniformly on compact subsets of $K$ as $n \to \infty$. Those conditions imply that $R=\mathbb{I}+\mathcal{O}(\varepsilon^n)$  uniformly on $\mathbb{C}$ as $n \to \infty$. There is a chain of transformations to arrive from $Y$ to $R$, which we represent $Y \rightarrow T \rightarrow S \rightarrow R$. Once we have arrived to $R$, we recover the entries of $Y$ going back from $R$ to $Y$. 

From \cite[Corollary 4]{BLS2} we have that the zero counting measures $\nu_n$ defined in (\ref{zerocountingmeasures}) corresponding to the monic orthogonal polynomials $\displaystyle q_{\mu_n,n}(z)=\prod_{j=1}^n\left(z-x_{j,n}\right)$ with respect to the varying measures $\mu_n$, satisfy 
\begin{equation}\label{equilibriummeasure}
\nu_n \overset{\star}{\rightarrow} \overline{\nu}=(1-a) \widetilde{\sigma} +a \lambda_0 \quad \mbox{as} \quad n \to \infty,
\end{equation}
where $\widetilde{\sigma}$ denotes the balayage of the measure $\sigma$ out of $\mathbb{C}\setminus [-1, \,1]$ onto $[-1,\,1]$. 

The measure $\overline{\nu}$ is the so called (see \cite[Theorem I.1.3]{ST}) equilibrium measure under the influence of the external field $ (1-a) V^{\sigma}(z)$. From (\ref{equilibriummeasure}) we have the following equilibrium condition
\begin{equation}\label{equilibriumcondition}
V^{\overline{\nu}}(t)-(1-a) V^{\sigma}(t)=a V^{\lambda_0}(t)=a \log 2, \quad t\in [-1,\,1].
\end{equation}

Observe the conditions $(3)$ in both Riemann Hilbert problems. $Y$ requires a normalization at infinity to get to $R$'s behavior at infinity. We modify $Y$ to obtain a Riemann-Hilbert problem whose solution is defined on the same set as $Y$, which approaches $\mathbb{I}$ as $n \rightarrow \infty$.  Let us introduce the function $g(z,\overline{\nu})$, which is the analytic potential corresponding to the measure $\overline{\nu}$ described in (\ref{equilibriummeasure})
\begin{equation}\label{defgn}
g(z,\overline{\nu})=-\int \log\, (z-t)\, \mbox{d} \overline{\nu}(t)=V^{\overline{\nu}}(z)-\,i\,\int \arg (z-t) \, \mbox{d}\overline{\nu}(t),
\end{equation}
with $\arg$ denoting the principal argument $g(z,\overline{\nu}) \in \mathcal{H}(K\setminus (-\infty,\,1])$. Substituting $g(z,\overline{\nu})$ in (\ref{equilibriumcondition}) we obtain 
\begin{equation}\label{roi}
g_+(x,\overline{\nu})+g_-(x,\overline{\nu})-2a \log 2-2(1-a)g(x,\sigma)=0, \quad x \in [-1,\,1].
\end{equation}
and
\begin{equation}\label{casta}
g_-(x,\overline{\nu})-g_+(x,\overline{\nu})=\left\{\begin{array}{l l l}
0 & \mbox{if} & x\geq 1 \\
2\pi i &\mbox{if} & x\leq 1 \\
2 \wp(x) &\mbox{if} & x\in (-1,\,1),
\end{array}
\right.
\end{equation}
with
\begin{equation}\label{california}
\wp(x)=\displaystyle \pi i \int^1_{x} \mbox{d} \overline{\nu}(t)=\pi i \left[(1-a) \displaystyle  \int^1_{x} \mbox{d} \widetilde{\sigma}(t)-\frac{a}{\pi}\arccos(x)  \right].
\end{equation}

Consider the matrices $\displaystyle G(z)=\left(\begin{array}{c c}
e^{n g(z,\overline{\nu})} & 0 \\ 
0 & e^{-n g(z,\overline{\nu})} 
\end{array}
\right)$  and $\displaystyle L=\left(\begin{array}{c c}
2^{n a} & 0 \\ 
0 & 2^{-n a} 
\end{array}
\right).$
We define the matrix function $T=LYGL^{-1}$. So $T$ is the unique solution of the following Riemann-Hilbert problem (T-RHP)
\begin{enumerate}
\item $T \in \mathcal{H}(\mathbb{C}\setminus [-1,\,1])$,
\item $T_+(x)=T_-(x)M(x)$, $x\in (-1,\,1),$
\item $T(z)=\mathbb{I}+\mathcal{O}\left(1/z\right)$ as $z \to \infty$,
\item $\displaystyle T(z)=\mathcal{O}\left(\begin{array}{c c}
1 &|z\pm 1|^{-1/2} \\
1 & |z\pm 1|^{-1/2} 
\end{array}\right)$ as $z \to \mp 1$,
\end{enumerate}
where according to (\ref{roi}) and (\ref{casta}) the jump matrix $\displaystyle M(x)=\left(\begin{array}{c c}
e^{-2 n \wp(x)} & (1-x^2)^{-1/2}  \\
0 & e^{2 n\wp(x)} 
\end{array}
\right),$  $x\in (-1,\,1).$ 

According to \cite[Theorem 1.34]{DKMc} there exists a domain $D$ containing the interval $[-1,\,1]$ where the function $\wp$ in (\ref{california}) admits an analytic extension on $D\setminus [-\infty,\,1]$ as
\begin{equation}\label{defwp}
\mathcal{A}(z)=\pi i \int_z^1 \, \mbox{d}\overline{\nu}(\zeta)=\pi i \int_z^1 \, \overline{\nu}^{\prime}(\zeta) \mbox{d} \zeta,
\end{equation}
where $\displaystyle \overline{\nu}^{\prime}(\zeta)=\frac{\psi(\zeta)}{\sqrt{1-\zeta^2}},$ with $\psi \in \mathcal{H}(D)$ and $\psi(x)>0,$ $x\in (-1,1).$ Observe that $\mathcal{A}_+(x)=\wp(x)=-\mathcal{A}_-(x)$, then  we write $\displaystyle M(x)=\left(\begin{array}{c c}
e^{-2 n \mathcal{A}_+(x)} & (1-x^2)^{-1/2}\\
0 & e^{-2 n\mathcal{A}_-(x)} 
\end{array}
\right)$.

We now seek jump conditions as we have in R-RHP. Consider a closed Jordan curve $\gamma \in D$ surrounding $[-1,\,1]$ as we have in R-RHP. Let $\Omega$ denote the bounded connected component of $\mathbb{C}\setminus\gamma$.  We consider the function $\sqrt{z^2-1}\in \mathcal{H} (\mathbb{C}\setminus [-1,\, 1]$ with $\sqrt{x^2-1}_\pm=\pm i \sqrt{1-x^2}$, $x \in (-1,\,1)$. We introduce the matrix function $S$ as follows
\[
S(z)=\left\{\begin{array}{c l l}
T(z) & \mbox{when} & z\in \mathbb{C} \setminus (\gamma \cup \Omega) \\ 
T(z) \left(\begin{array}{c c}
1 & 0 \\ 
-i\sqrt{z^2-1}\, e^{-2 n \mathcal{A}(z)}& 1
\end{array} \right) & \mbox{when} & z\in \Omega
\end{array} 
\right..
\]
The matrix function $S$ is the solution of the following Riemann-Hilbert problem (S-RHP):
\begin{enumerate}
\item $S \in \mathcal{H}(\mathbb{C} \setminus (\gamma \cup [-1,\,1]))$,
\item $\displaystyle S_+(x)=S_-(x) \left(\begin{array}{c c}
0 & \displaystyle (1-x^2)^{-1/2} \\ 
- (1-x^2)^{1/2} &0 
\end{array}
\right)$, when $x \in (-1,\,1)$ and
\item[] $\displaystyle S_+(\zeta)=S_-(\zeta) \left(\begin{array}{c c}
1 & 0 \\ 
-i\sqrt{z^2-1}\, e^{-2 n \mathcal{A}(z)}& 1
\end{array} \right),$ when $\zeta \in \gamma.$
\item $S(z)=\mathbb{I}+\mathcal{O}\left(1/z\right)$ as $z \to \infty$,
\item $\displaystyle S(z)=\mathcal{O}\left(\begin{array}{c c}
1 & |z\pm 1|^{-1/2} \\
1 & |z\pm 1|^{-1/2}
\end{array}\right)$ as $z \to \mp 1$.
\end{enumerate}

The jump matrix on $\gamma$ approaches uniformly the identity matrix $I$. However it does not happen in $[-1,\, 1]$. We fix this problem in the interval following the steps in \cite{Ku}. Consider the matrix
\begin{equation}\label{Ndef}
N(z)=\left(\begin{array}{l l}
\displaystyle \frac{\displaystyle a(z)+a^{-1}(z)}{\displaystyle 2} \frac{D(\infty)}{D(z)}& \displaystyle \frac{\displaystyle a(z)-a^{-1}(z)}{\displaystyle 2i} D(\infty) D(z) \\ 
\displaystyle \frac{\displaystyle a(z)-a^{-1}(z)}{\displaystyle -2i}\frac{1}{D(\infty)D(z)} & \displaystyle \frac{\displaystyle a(z)+a^{-1}(z)}{\displaystyle 2}\frac{D(z)}{D(\infty)} 
\end{array}
\right),
\end{equation}
where $\displaystyle D(z)=\left(\frac{z}{\sqrt{z^2-1}}+1\right)^{1/2}, \quad D(\infty)=\sqrt{2}$ and $\displaystyle a(z)= \frac{(z-1)^{1/4}}{(z+1)^{1/4}}.$ Hence $N$ is the solution of the following Riemann-Hilbert problem
\begin{enumerate}
\item $N \in \mathcal{H}(\mathbb{C}\setminus [-1,\,1])$,
\item $\displaystyle N_+(x)=N_-(x)\left(\begin{array}{c c}
0 & \displaystyle (1-x^2)^{-1/2} \\
-(1-x^2)^{1/2} &0 
\end{array}
\right)$, $x\in (-1,\,1),$
\item $N(z)=\mathbb{I}+\mathcal{O}\left(1/z\right)$ as $z \to \infty$,
\item $\displaystyle N(z)=\mathcal{O}\left(\begin{array}{ c c }
1 & \left| z\pm 1\right|^{-1/2} \\
1 & \left| z \pm 1\right|^{-1/2} 
\end{array}\right)$ as $z \to \mp 1$.
\end{enumerate}

Introduce the matrix function $R(z)=S(z)N^{-1}$. Taking into account that $R$ and $S$ satisfy the same jump conditions across $(-1,\,1)$ we have that $R_+(x)=R_-(x)$. So $R \in \mathcal{H}(\mathbb{C} \setminus (\gamma \cup \{-1,1\}))$. Since $\det N=1$ and from (\ref{Ndef}) we have that
\[
N^{-1}(z)=\mathcal{O}\left(\begin{array}{ c c }
\left| z\pm 1\right|^{-1/2} & \left| z\pm 1\right|^{-1/2} \\
1 & 1 
\end{array}\right) \quad \mbox{as} \quad z \to \mp 1.
\]
Thus, when $z \to \mp 1$
\[
R(z)=\mathcal{O}\left(\begin{array}{c c}
1 & \left| z\pm 1\right|^{-1/2}  \\
1 & \left| z\pm 1\right|^{-1/2}  \end{array}\right)\mathcal{O}\left(\begin{array}{ c c }
\left| z\pm 1\right|^{-1/2}  & \left| z\pm 1\right|^{-1/2}  \\
1 & 1 
\end{array}\right).
\]
This implies 
\[
R(z)= \mathcal{O}\left(\begin{array}{ c c }
\left| z\pm 1\right|^{-1/2} & \left| z\pm 1\right|^{-1/2} \\
\left| z\pm 1\right|^{-1/2} & \left| z \pm 1\right|^{-1/2} 
\end{array}\right),
\]
which means that each entry of $R$ has isolated singularities at $z=-1$ and $z=1$ with $R(z)=\mathcal{O}|z\pm 1|^{-1/2}$ as $z \to \mp 1$, and they are removable. So $R$ satisfies the following Riemann-Hilbert conditions:
\begin{enumerate}
\item $R \in \mathcal{H}(\mathbb{C} \setminus \gamma)$,
\item $\displaystyle R_+(\zeta)=R_-(\zeta) \left(\begin{array}{c c}
1 & 0 \\
e^{-2 n \mathcal{A}(\zeta)} & 1 
\end{array}
\right),$ when $\zeta \in \gamma.$
\item $R(z)=\mathbb{I}+\mathcal{O}\left(1/z\right)$ as $z \to \infty$.
\end{enumerate}

From (\ref{defwp}) $2\mathcal{A}\in \mathcal{H}(D\setminus[-\infty,\,1])$ and $\mathcal{R}e(2\mathcal{A}_{\pm}(x))=0$, $x \in [-1,\,1]$. Using the fact $\displaystyle 2\mathcal{A}_{\pm}^{\prime}(x)=\pm 2 i \overline{\nu}^{\prime}(x)=\mp 2 \pi i\frac{\psi(x)}{\sqrt{1-x^2}}$, $x \in (-1,\,1)$ and the Cauchy-Riemann conditions we have that  $\displaystyle \frac{\partial \mathcal{R}e(2\mathcal{A}_\pm)}{\partial y}(x)>0$, $x \in [-1,\,1]$. Since $\mathcal{R}e(2\mathcal{A})$ is a harmonic function on $D\setminus [-1,\,1]$ we have $\mathcal{R}e(2\mathcal{A}(z))>0,$ $z\in D\setminus [-1,\, 1]$. So given an arbitrary compact set $K \subset D\setminus [-1,\,1]$ there exists a constant $c(K)>0$ and an $N\in \mathbb{N}$ large enough such that for every $n\geq N$ the function $\mathcal{R}e(2\mathcal{A}(z))(z))>c(K)$, $z \in K$ and $n\geq N$. Note also that $\phi_n \to 0$ as $n \to \infty$. So according \cite{AV} we arrive at $R(z)=\mathbb{I}+\mathcal{O}(e^{-c n})$ uniformly as $n \to \infty$ for each compact set $K \subset \mathbb{C} \setminus [-1,\,1]$. Take $z \in \mbox{Int}(\gamma)$. Going back now from $R$ to $Y$, and considering just the first column, we have that:
\[
e^{n g(z, \overline{\nu})} \left(\begin{array}{l}
q_{\mu_n,n}(z) \\
2^{-2 n a} d_{n,n-1} q_{\mu_n,n-1}(z) 
\end{array}
\right)=\left(\mathbb{I}+\mathcal{O}(e^{-c n}) \right) 
\]
\[
\times\left(\begin{array}{l l}
\displaystyle \frac{\displaystyle a(z)+a^{-1}(z)}{\displaystyle 2} \frac{D(\infty)}{D(z)}& \displaystyle \frac{\displaystyle a(z)-a^{-1}(z)}{\displaystyle 2i} D(\infty) D(z) \\
\displaystyle \frac{\displaystyle a(z)-a^{-1}(z)}{\displaystyle -2i}\frac{1}{D(\infty)D(z)} & \displaystyle \frac{\displaystyle a(z)+a^{-1}(z)}{\displaystyle 2}\frac{D(z)}{D(\infty)} 
\end{array}
\right)  \left(\begin{array}{c}
1 \\ \\
(1-z^2)^{1/2}e^{-2 n \mathcal{A} (z)} 
\end{array} \right).
\]

Take the $+$ boundary values of all quantities involved when $z \to x \in (-1,1)$. Using the following identities from \cite{Ku} or \cite{KMVV} 
\[
\frac{a_+(x)\pm a_+(x)}{2}=\frac{1}{\sqrt{2}(1-x^2)^{1/4}} \exp \left(\displaystyle \pm \frac{i}{2} \arccos x \mp i \frac{\pi}{4} \right),
\]
we have $\displaystyle 
\exp\left\{ n V^{\overline{\nu}}(x)\right\} \left(\begin{array}{l}
q_{\mu_n,n}(x) \\
2^{-2 n a} d_{n,n-1} q_{\mu_n,n-1}(x) 
\end{array}
\right)=\left(\mathbb{I}+\mathcal{O}(e^{-c n}) \right) \left(\begin{array}{c}
    K_{1,n} (x)\\
    K_{2,n} (x)
\end{array}\right),$ where $\displaystyle 
K_{2,n}(x)=\displaystyle \frac{1}{i} \cos n \left((1-a) \pi \int_{x}^1 \mbox{d} \widetilde{\sigma}(t) - (a-1/n) \arccos{x} \right)$ and 

$\displaystyle K_{1,n}(x)=\displaystyle 2 \cos n\left((1-a)\pi \int^1_x \mbox{d} \widetilde{\sigma}(t) -a \arccos{x}\right)$.
 
Finally we obtain
\[
q_{\mu_n,n}(x)= \left(1+\mathcal{O}(e^{-c n})\right) e^{ -n V^{\overline{\nu}}(x)}K_{1,n}(x)+\mathcal{O}(e^{-c n})e^{- n V^{\overline{\nu}}(x)} K_{2,n}(x)
\]
and
\[
\frac{d_{n,n-1}}{2^{2 n a}} q_{\mu_n,n-1}(x)=\left(1+\mathcal{O}(e^{-c n})\right) e^{ -n V^{\overline{\nu}}(x)}K_{2,n}(x)
+\mathcal{O}(e^{-c n})e^{- n V^{\overline{\nu}}(x)} K_{1,n}(x),
\]
which are exactly the equalities stated in (\ref{qmunn}) and (\ref{qmunn-1}).
\end{proof}

\section{Proof of Theorem \ref{main}}\label{proofmaintheorem} We combine Lemma \ref{explicitexp} and Lemma \ref{extructura}. First we choose a special scheme of nodes $\displaystyle {\bf y}=\left\{{\bf y}_n=\left(y_{1,n},\ldots,y_{n,n}\right)\right\}_{n \in \Lambda}$ which satisfies (\ref{maincond}). The corresponding polynomials have the following form
\begin{equation}\label{drpn}
P_n(x)=\prod_{j=1}^n \left(x-y_{j,n}\right)=\Phi_n(x) \cos n \left((1-a)\pi \int^1_x \mbox{d} \widetilde{\sigma}(t)-a \arccos x \right),
\end{equation}
Where $\Phi_n$ is a real valued function on $[-1,\,1]$ that never vanishes. Let us rewrite the relation (\ref{qmunn}) as follows
\[
q_{\mu_n,n}(x)= 2\exp\left\{-n V^{\overline{\nu}}(x)\right\} \left(\cos  n \left((1-a)\pi \int^1_x \mbox{d} \widetilde{\sigma}(t)-a \arccos x \right) +\mathcal{O}(e^{-cn}) \right)
\]
\[
= 2\exp\left\{-n V^{\overline{\nu}}(x)\right\}\cos  n \left((1-a)\pi \int^1_x \mbox{d} \widetilde{\sigma}(t)-a \arccos x + \mathcal{O}(e^{-cn}) \right).
\]
Combining the above equality with (\ref{drpn}) we obtain that $\displaystyle \int_{x_{j,n}}^{y_{j,n}} \, \mbox{d} \overline{\nu}(t)=\mathcal{O}(e^{-cn})$ and  $x_{j,n}-y_{j,n}=\mathcal{O}(e^{-cn}).$ This implies that $\displaystyle \limsup_{n\to \infty}|q_{\mu_n,n}-P_n|^{1/n}(x)=\exp(-c-V^{\overline{\nu}}(x))$ on $ [-1,\,1].$ Hence
\[
\limsup_{n\to \infty} \left(\frac{1}{\left|\left|q_{\mu_n, n-1}\right|\right|^2_{2, \mu_n}}\left[\frac{(q_{\mu_n,n}-P_n)q_{\mu_n,n-1}}{q_{m(n)}^2}\right]^{\prime}\right)^{1/n}(x)=\exp (-c+V^{\sigma}(x))<1.
\]
Here we have taken into account that $\displaystyle \mathrm{dist}\left(\left\{\zeta_1,\ldots,\zeta_{\kappa}\right\},[-1,\,1]\right)>1$, which yields $V^{\sigma}(x)<0$, $x \in [-1, \, 1]$. Then we see that condition (\ref{extructura}) in Lemma \ref{lemaclave} is satisfied. We now prove that condition (\ref{entrelazamiento}) holds. 

Taking into account the equality (\ref{qmunn-1}) we have that the zeros of the polynomials $q_{\mu_n,n-1}$ satisfy that for each $j=1, \ldots, n-1$, $n\in \mathbb{N}$
\begin{equation}\label{qn-1zeros}
(1-a) \pi \int_{x_{j,n-1}}^1 \mbox{d} \widetilde{\sigma}(t)-(a-1/n) \arccos x + \mathcal{O}(e^{-cn})= \frac{2j-1}{2 n} \pi.
\end{equation}
For each $j=1,\ldots,n-1$, we subtract the above equality (\ref{qn-1zeros}) to (\ref{maincond}), and we obtain that
\[
\int_{y_{j,n}}^{x_{j,n-1}} \mbox{d} \overline{\nu}(t) = \frac{1}{n}\left(1 + o(1)\right) \quad \mbox{as} \quad n \to \infty.
\]
This means that for $n$ large enough $y_{j,n} < x_{j,n-1}$, $j=1, \ldots, n-1$. Considering now the $j$th equality in (\ref{maincond}) and the $j+1$th in (\ref{qn-1zeros}) we have that 
\[
\int_{x_{j,n-1}}^{y_{j+1,n}} \mbox{d} \overline{\nu}(t)=\frac{1}{n}\left(\pi-1+o(1)\right) \quad \mbox{as} \quad n\to \infty,
\]
which implies that $x_{j,n-1}<y_{j+1,n}$. So condition (\ref{entrelazamiento}) holds. This proves that the scheme ${\bf y}$ is convergent. 

Once we know that ${\bf y}$ is convergent, we can construct another convergent scheme $\displaystyle {\bf x}=\left\{{\bf x}_n=\left(x_{1,n},\ldots,x_{n,n}\right)\right\}_{n \in \Lambda}$ taking 
\[
x_{j,n}-y_{j,n}\leq A e^{-\ell n}, \quad j=1, \ldots,n, \quad n\in \Lambda,
\]
and follow the previous process. This completes the proof. 

%
%
%

\end{document}